\numberwithin{equation}{section}
\definecolor{purple}{rgb}{0.9,0,0.8}
\definecolor{gray}{rgb}{0.7,0.7,0.7}
\newcommand{\abbr}[1]{{\sc\lowercase{#1}}}
\newtheorem{thm}{Theorem}[section]
\newtheorem{cor}[thm]{Corollary}
\newtheorem{lem}[thm]{Lemma}
\newtheorem{ppn}[thm]{Proposition}
\theoremstyle{definition}
\newtheorem{defn}[thm]{Definition}
\newtheorem{remark}[thm]{Remark}
\newcommand{\beq}{\begin{equation}}
\newcommand{\eeq}{\end{equation}}
\newcommand{\ep}{\epsilon}
\newcommand{\B}{\mathbb{B}}
\newcommand{\C}{\mathbb{C}}
\newcommand{\D}{\mathbb{D}}
\newcommand{\E}{\mathbb{E}}
\newcommand{\F}{\mathbb{F}}
\newcommand{\G}{\mathbb{G}}
\newcommand{\K}{\mathbb{K}}
\newcommand{\N}{\mathbb{N}}
	\renewcommand{\P}{\mathbb{P}}
\newcommand{\R}{\mathbb{R}}
\newcommand{\Z}{\mathbb{Z}}
\newcommand{\ovl}{\overline}
\newcommand{\cE}{\mathcal{E}}
\newcommand{\cF}{\mathcal{F}}
\newcommand{\cS}{\mathcal{S}}
\newcommand{\cV}{\mathcal{V}}
\newcommand{\wt}{\widetilde}
\renewcommand{\emptyset}{\varnothing}
\begin{document}
\title{Growing in time IDLA cluster is recurrent}
\author{Ruojun Huang}
\address{Courant Institute of Mathematical Sciences, USA.}

\keywords{Isoperimetry, internal diffusion limited aggregation, recurrence, random walk, random environment, changing environment}
\subjclass[2010]{Primary 60K37, 82C24; Secondary 60K35.}
\date{\today}
\maketitle
\begin{abstract}
We show that Internal Diffusion Limited Aggregation (\abbr{IDLA}) on $\Z^d$ has near optimal Cheeger constant when the growing cluster is large enough. This implies,  through a heat kernel lower bound derived previously in \cite{H}, that simple random walk evolving independently on growing in time \abbr{IDLA} cluster is recurrent when $d\ge 3$.
\end{abstract}

\section{Introduction}
This work is concerned with proving recurrence of simple random walk on certain type of growing clusters. Specifically, we consider the Internal Diffusion Limited Aggregation (\abbr{IDLA} \cite{LBG}) $\{\D_n\}_{n\in\N}$ on $\Z^d$, $d\ge 3$ in discrete time, that is, starting from $\D_0=\{\mathbf 0\}$ consisting of the origin, one iteratively grows the cluster by adding to $\D_n$ the first vertex $v_n\in \Z^d\backslash\D_n$ that is hit by an independent particle injected from the origin performing discrete-time simple random walk (\abbr{SRW}), and set $\D_{n+1}=\D_n\cup\{v_n\}$, for every $n\in\N$. As such, $\D_n$ are random,  finite connected vertex subsets of $\Z^d$, monotonically increasing with $|\D_n|=n$, where we denote by $|\cdot|$ the cardinality of a set. Being a canonical model of Laplacian growth, \abbr{IDLA} has been much studied, cf. \cite{DF, LBG, GQ, AG1, AG2, JLS1, JLS2} and references therein, with the well-known shape and fluctuation theorems stating, in particular, that for each $d\ge 3$, there exists finite constant $a=a(d)$, such that almost surely for all $n=\lfloor\omega_dr^d\rfloor$ large enough, we have that $\B_{r-a\sqrt{\log r}}\subset \D_n\subset \B_{r+a\sqrt{\log r}}$, where $\omega_d$ denotes the volume of a unit ball in $\R^d$ and $\B_R=\mathbf B_R\cap\Z^d$ for the origin-centered Euclidean ball $\mathbf{B}_R$ of radius $R$. In this article, we view each $\D_n$ as a subgraph of $\Z^d$ imposing an edge  between any two vertices of $\Z^d$-distance $1$ within the cluster, and the inclusion above holds in the sense of graphs. 

Consider now an (autonomous) simple random walk  $\{X_n\}_{n\in\N}$ evolving independently on $\{\D_n\}_{n\in \N}$ starting from $X_0=\mathbf 0$. That is, fixing a realization $\omega$ of the \abbr{IDLA} cluster $\{\D^\omega_n\}_{n\in\N}$, at each step $n$, $X_n\in\D^\omega_n$ chooses uniformly among all vertices in $\D^\omega_n$ having $\Z^d$-distance $1$ from it and passes to $X_{n+1}$, before domain changes from $\D^\omega_n$ to $\D^\omega_{n+1}$. There is at least one vertex available for $X_n$ and at most $2d$. Fixing $\omega$, such random walk $\{X_n\}_{n\in\N}$ is a time-inhomogeneous Markov chain, albeit at perpetual non-equilibrium, but we are interested in whether the walk is recurrent to the origin, and whether recurrence holds for almost every realization $\omega$ of its environment. Indeed, we have studied this problem within a more general set-up in \cite{DHS}, in particular showing that if one considers a continuous-time version of \abbr{IDLA} $\{\D_t\}_{t\ge 0}$,  where particles are injected from the origin at an inhomogeneous Poisson rate, then on $\Z^d$, $d\ge 3$, for a.e. $\omega$, the (discrete-time) \abbr{SRW} $\{X_{\lfloor t \rfloor}\}_{t\ge 0}$ returns to its starting point finitely many times with probability one as soon as  $\int_1^\infty|\D^\omega_t|^{-1}dt$ is finite. However, it is only conjectured that for constant injection rate of particles, namely when $|\D_t|$ grows approximately linearly, w.p.1 $\{X_{\lfloor t \rfloor}\}_{t\ge 0}$ visits its starting point infinitely often as $\int_1^\infty |\D^\omega_t|^{-1}dt$ then diverges. See Section \ref{sec:cont} where we revisit this problem, for details. Similarly, the latter outcome is expected for the canonical discrete-time \abbr{IDLA} dynamics in which $|\D_n|=n$. The reason for such recurrent cases to be open is that one needs to rule out the possibility that the {\it evolving} \abbr{IDLA} boundary be too rough and traps the random walk, thus delaying its return to the bulk. This is not automatically ruled out by the limiting shape. Besides, in the time-inhomogeneous setting one loses some robust tools such as commute time identity from electrical networks. 

It turns out that the issue can be resolved by understanding the isoperimetry property of a typical large cluster.
In a previous work \cite{H}, we give sufficient conditions for recurrence (and transience) of \abbr{SRW} evolving independently  on sequences of growing subgraphs of a pre-given infinite graph, in terms of the behaviors of the Cheeger (i.e. isoperimetric) constant of each subgraph in that sequence. When certain sufficient condition is satisfied,  see \eqref{regul}, the heat kernel of $X_n$ at its starting point can be bounded below by $c/|\D_n|$ at time $n$, for some absolute constant $c>0$. This yields corresponding lower bounds on the expected occupation time (i.e. Green's function), upon integrating the heat kernel. The framework includes as special case the \abbr{IDLA} cluster on $\Z^d$, as soon as the cluster is large enough.  In this work, we show that the isoperimetry of large \abbr{IDLA} cluster is near optimal, namely it differs from the isoperimetry of a lattice ball by at most a logarithmic correction, see Theorem \ref{thm:cheeger}, which implies recurrence of the random walk, see Corollary \ref{rec}. To derive the former, we rely heavily on the known limit shape and fluctuation bounds from circularity. The relation between isoperimetry and heat kernel or mixing time estimates on fixed graphs is well understood, cf. \cite{B, PS, SC}. Even in random environments such as supercritical percolation clusters, a key step in deriving the latter estimates is to understand the precise isoperimetric inequality satisfied by the cluster at large scales, see \cite{BM, MR, CF, PRS}. Here we see something similar, though in a dynamic setting.

\section{Framework and main results}    \label{sec:frame}
We work first in discrete time, denoting by $\{\D_n\}_{n\in \N}$ the growing \abbr{IDLA} cluster on $\Z^d$. We recall its growth mechanism: starting at $\D_0=\{\mathbf 0\}$, the {\it random} cluster grows by adding one vertex per integer time, chosen by an independent particle, denoted $Y_n$, doing simple random walk from the origin till visiting the first vertex $v_n$ outside of cluster, then we set $\D_{n+1}=\D_n\cup\{v_n\}$. Hence, $|\D_n|=n$. While respecting its growth mechanism, we omit particle $Y_n$'s travel time and consider it settled on $\Z^d\backslash \D_n$ instantaneously upon injected, for every $n\in\N$. Each $\D_n$ is then viewed as a subgraph of the lattice graph $\Z^d$, by imposing an edge between any $x, y\in \D_n$ of $\Z^d$-distance $1$, which we denote $x\sim y$. Further, we denote by $\B(v,R)=\mathbf{B}(v, R)\cap\Z^d$, the graph obtained from the intersection of $\Z^d$ lattice graph and an {\it open} Euclidean ball of radius $R>0$ centered at $v\in\Z^d$, with $\B_R:=\B(\mathbf 0,R)$ in case the center is the origin. 

Now, we recall the shape and fluctuation theorems for $\{\D_n\}$. The formulation below is taken from \cite[Theorem 0.1]{JLS2}, \cite[Theorem 1]{JLS1}.
\begin{thm}[\cite{LBG, AG1, AG2, JLS1, JLS2}] \label{shape-fluct}
For $d\ge 3$, there exist some finite constant $a=a(d)$ and integer $n_0$ such that for all $n=\lfloor\omega_d r^d\rfloor\ge n_0$, we have that
\begin{align}   \label{shape-event}
\P(\cS_n)\ge 1-n^{-2},   \qquad \text{where } \quad  \cS_n:=\{\B_{r-a\sqrt{\log r}}\subset \D_n\subset \B_{r+a\sqrt{\log r}}\}.
\end{align}
For $d=2$, the same holds with $\log r$ replacing $\sqrt{\log r}$ in $\cS_n$.
\end{thm}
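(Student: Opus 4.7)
The statement synthesizes the classical Lawler-Bramson-Griffeath shape theorem \cite{LBG} and the near-optimal $O(\sqrt{\log r})$ fluctuation refinements of Asselah-Gaudilli\`ere \cite{AG1, AG2} and Jerison-Levine-Sheffield \cite{JLS1, JLS2}; I sketch the martingale strategy of \cite{JLS2}. By the abelian property of \abbr{IDLA}, particles may be released in any order without affecting the law of $\D_n$. For each candidate point $z \in \Z^d$ with $|z|$ near $r$, I would couple the actual \abbr{IDLA} evolution to an auxiliary non-stopping process in which each injected particle performs independent \abbr{SRW} in an ambient ball $\B_{2r}$ until exit. Let $N_z$ count how many of these auxiliary walks visit $z$, and $M_z$ count how many \abbr{IDLA} particles settle at $z$ or inside a thin shell around $z$. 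The difference $N_z - M_z$ telescopes into a sum of martingale increments indexed by particles, via the usual optional-stopping identity for harmonic functions on $\B_{2r}$.

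For the inner inclusion $\B_{r-a\sqrt{\log r}} \subset \D_n$, if $|z| \leq r - a\sqrt{\log r}$ then the deterministic input $\E[N_z] \geq c\, n \cdot G_{\B_{2r}}(\mathbf 0, z)/|\B_{2r}|$ is of order $r$ for $d \geq 3$, from lattice Green's function asymptotics. The martingale $N_z - M_z$ has summed conditional variance of order $\log r$ in $d \geq 3$, controlled via sharp hitting-probability bounds on $\Z^d$. A Freedman-type concentration inequality then yields $|N_z - M_z| \leq C\sqrt{\log r}$ with probability $1 - r^{-K}$ for any $K$, by choosing $C$ (equivalently, tuning $a$) sufficiently large. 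This forces $M_z > 0$, hence $z \in \D_n$, and a union bound over the $O(r^{d-1}\sqrt{\log r})$ candidate points delivers the inner inclusion with probability $1 - n^{-2}$. The outer inclusion $\D_n \subset \B_{r+a\sqrt{\log r}}$ is dual: I would track particles that escape past radius $r + a\sqrt{\log r}$ and show too few such excursions can occur, since the total mass equals $n = \lfloor \omega_d r^d \rfloor$.

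The main obstacle is arranging the coupling so that the martingale has individually bounded increments while the summed conditional variance is only $O(\log r)$ rather than $O(r)$; the latter estimate is what arises under naive couplings and would yield only the classical $o(r)$ bound. The refinement requires the ``point of view of target $z$'' perspective and sharp discrete harmonic measure estimates near $\partial \B_r$, and it is also where handling the dependence between a particle's trajectory and the cluster shape seen so far is delicate. In $d = 2$ the discrete Green's function grows like $\log r$ rather than remaining bounded, which inflates the variance by a $\log$ factor and explains the $\log r$ fluctuation in place of $\sqrt{\log r}$. The probability threshold $1 - n^{-2}$ is obtained by tuning the constants $K$ and $a$ so that the union bound over boundary points and Borel-Cantelli summability both go through.
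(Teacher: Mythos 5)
This theorem is not proved in the paper at all: it is imported verbatim from the literature (the text says the formulation is taken from \cite{JLS2} and \cite{JLS1}, with the result attributed to \cite{LBG, AG1, AG2, JLS1, JLS2}), and the paper uses it as a black box. So there is no internal proof to compare against; what you have written is an outline of how the cited papers establish the result, which is a reasonable thing to do but should be judged as such.

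As a proof sketch of the external result, your outline correctly identifies the Diaconis--Fulton abelian property, the martingale/optional-stopping strategy of \cite{JLS1, JLS2}, and the reason for the dichotomy between $\sqrt{\log r}$ in $d\ge 3$ and $\log r$ in $d=2$ (boundedness versus logarithmic growth of the Green's function). However, it is not a proof: the step you yourself flag as ``the main obstacle'' --- arranging the coupling so that the summed conditional variance is $O(\log r)$ rather than $O(r)$ --- is precisely the content of \cite{JLS1, JLS2}, and it is resolved there not by a generic Freedman inequality on $N_z - M_z$ but by a bootstrap linking ``early points'' (sites occupied outside $\B_{r+a\sqrt{\log r}}$) to an abundance of ``late points'' (unoccupied sites inside $\B_{r-a\sqrt{\log r}}$) and vice versa, together with a thin-tentacle estimate; none of that appears in your sketch. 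There is also a quantitative slip: for $|z|\approx r$ one has $\E[N_z]= n\,\mathsf G_{\B_{2r}}(\mathbf 0,z)\asymp r^d\cdot r^{2-d}= r^2$, not ``$c\,n\,G_{\B_{2r}}(\mathbf 0,z)/|\B_{2r}|$ of order $r$'' --- the extra division by $|\B_{2r}|$ is dimensionally wrong. Since the paper's logic only requires citing Theorem \ref{shape-fluct}, the appropriate course is to cite it rather than attempt to reprove it; if a self-contained proof were demanded, your sketch would have genuine gaps exactly at the points you identify as obstacles.
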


We also require some notation for general graphs. To this end, given a graph $G$, we write $\cV(G)$ its vertex set, and $\cE(G)$ its edge set. Throughout, we write $|G|$ for $|\cV(G)|$, for short.
We now specify a notion of vertex boundary that we extensively use. 
\begin{defn}  
Given a graph $G$, the {\it inner vertex boundary} of $A\subset\cV(G)$ {\it relative to $G$} is defined to be
\begin{align}   \label{vert-bdry}
\partial^GA:=\{x\in A:\, \exists y\in \cV(G)\backslash A \text{ such that } (x,y)\in\cE(G)\}.
\end{align}
We omit the superscript $G$ in $\partial^GA$ in case $G=\Z^d$.
\end{defn}
Define the Cheeger constant (aka bottleneck ratio) of a finite graph $G$ as 
\begin{align}  \label{bottleneck}
\Phi(G)=\inf_{A\subset \cV(G), \, |A|\le |G|/2}\Big\{\frac{|\partial^GA|} {|A|}\Big\}.
\end{align}
This quantity is a well-known gauge for the connectivity of a finite set, in particular governing the mixing rates for reversible Markov chains on the set, see \cite{LPW}. Instead of the vertex boundary  in \eqref{bottleneck}, more commonly one uses the edge boundary (that is, the set of all edges with one end in $A$, the other in $\cV(G)\backslash A$); in our setting, they differ by at most a factor of $(2d-1)$, which is unimportant for us. In the sequel, we will be considering \eqref{bottleneck} for $G=\D_n$.

In the case of lattice balls $\B_R\subset\Z^d$, it is well known that there exists some positive constant $c_d$, such that $\Phi(\B_R)\ge c_d|\B_R|^{-1/d}$ for all $R>1$, and such isoperimetry is optimal for finite subgraphs of $\Z^d$. Given Theorem \ref{shape-fluct}, it is pertinent to wonder if the isoperimetry of lattice balls is inherited by \abbr{IDLA}, and the main result of this article is the following. 

\begin{thm} \label{thm:cheeger}
Fix $d\ge 3$. There exists some positive constant $c=c(d)$, such that $\P$-a.s. for all $n$ large enough, we have that 
\begin{align}   \label{lower-ch}
\Phi(\D_n)\ge cn^{-1/d}(\log n)^{-3/2}. 
\end{align}
\end{thm}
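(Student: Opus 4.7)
The plan is to reduce to the shape event and perform a case split on where $A$ sits relative to $\B_{r-L}$, with $L := a\sqrt{\log r}$. By Borel--Cantelli applied to $\P(\cS_n^c)\le n^{-2}$ from Theorem~\ref{shape-fluct}, almost surely $\cS_n$ holds for all $n$ large, so it suffices to verify the pointwise bound $\Phi(\D_n) \ge c n^{-1/d}(\log n)^{-3/2}$ on every realization in $\cS_n$ for $n$ large and for every $A \subset \D_n$ with $1 \le |A| \le n/2$. Decompose $A = A_1 \sqcup A_2$ with $A_1 := A \cap \B_{r-L}$ and $A_2 := A \setminus \B_{r-L}$, and note $|A_2| \le |\D_n \setminus \B_{r-L}| \le CLr^{d-1} = O(n^{(d-1)/d}\sqrt{\log n})$.

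When $|A_1| \ge |A|/2$ (the ``inner-heavy'' case), I would apply the sharp vertex-isoperimetric inequality for the lattice ball $\B_{r-L}$ to $A_1$ if $|A_1| \le |\B_{r-L}|/2$, and to $\B_{r-L}\setminus A_1$ otherwise (which has size $\ge n/4$ for $n$ large, since $|\B_{r-L}| \ge n - CLr^{d-1}$). The key observation is that $\B_{r-L} \subset \D_n$, so any neighbor $y \in \B_{r-L}\setminus A_1$ of a vertex $x \in A_1$ automatically lies in $\D_n \setminus A$; hence $\partial^{\B_{r-L}} A_1 \subset \partial^{\D_n} A$, with the symmetric statement (up to a factor $2d$ for translating outer boundary of the complement into inner boundary of $A$) in the complement subcase. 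Either way one obtains $|\partial^{\D_n} A|/|A| \ge c n^{-1/d}$, already stronger than the claim and with no logarithmic loss.

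The case $|A_2| > |A|/2$ is the real work; here $|A| \le 2CLr^{d-1}$ and most of $A$ lies in the thin annular shell where $\D_n$ coexists with a gap set $\B_{r+L}\setminus\D_n$ of comparable volume. A naive attempt to reduce to lattice-ball isoperimetry on $A$ in $\B_{r+L}$ and subtract the vertices of $A$ adjacent only to gaps is defeated, because the gap volume dominates $|A|^{(d-1)/d}$ in this regime. My plan is to coarse-grain the shell by boxes of side $\sim L$: using that $|\D_n \cap (\B_{r+L}\setminus\B_{r-L})|$ and $|(\B_{r+L}\setminus\B_{r-L})\setminus\D_n|$ are comparable, an $A$-mass-weighted pigeonhole identifies a positive fraction of $A$-mass living inside boxes of $\D_n$-density bounded below, and a local isoperimetric inequality in those boxes produces boundary that is automatically inside $\D_n$; residual $A$-mass in sparse boxes is absorbed by the adjacency of $A$ to $\B_{r-L}$. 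The total loss $(\log n)^{3/2}$ accumulates from the width $L = \sqrt{\log r}$ of the shell and the bookkeeping at two further stages of the coarse-graining. The main obstacle is exactly this coarse-graining: the only structural input on $\D_n$ is the sandwich of Theorem~\ref{shape-fluct}, so an adversarial $A$ could concentrate in boxes where $\D_n$ happens to be locally sparse, and making the pigeonhole quantitative enough against such configurations is the technical heart of the proof.
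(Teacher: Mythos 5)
Your reduction to the shape event, the decomposition $A=A_1\sqcup A_2$ across $\partial\B_{r-L}$, and the treatment of the inner-heavy case all match the paper's deterministic Lemma~\ref{decomp}. The gap is in the outer-heavy case, and it is not merely technical: the strategy of extracting the isoperimetric bound for $A_2\subset\D_n\setminus\B_{r-L}$ from the sandwich $\B_{r-L}\subset\D_n\subset\B_{r+L}$ alone (via coarse-graining into boxes of side $\sim L$) cannot succeed, because the desired inequality is \emph{false} for general deterministic sets satisfying that sandwich. Concretely, take $G=\B_{r-L}\cup S$ where $S$ is a connected subset of the shell of cardinality $\approx n-|\B_{r-L}|\sim Lr^{d-1}$ that stays at distance $\ge 2$ from $\B_{r-L}$ except along a single connecting path. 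Then $A=S$ has $|A|\sim n^{(d-1)/d}\sqrt{\log n}\le n/2$ but $|\partial^{G}A|=O(1)$, so $|\partial^G A|/|A|$ is far below $n^{-1/d}(\log n)^{-3/2}$. This also defeats your fallback that ``residual $A$-mass in sparse boxes is absorbed by the adjacency of $A$ to $\B_{r-L}$'': a subset of the fluctuation layer need not be adjacent to the inner ball at all, only connected to it through an arbitrarily thin neck. The paper's own remark after Theorem~\ref{thm:cheeger} makes exactly this point — the theorem does not follow from Theorem~\ref{shape-fluct}.

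What is actually needed, and what the paper supplies in Proposition~\ref{fluct-good}, is probabilistic input from the IDLA dynamics itself. Every vertex of $A\subset\F_n$ is settled by one of the last $n-n'\sim r^{d-1}\sqrt{\log r}$ injected particles, and each such particle must pass through the cutset $\partial^{\D_n}A$ before settling in $A$. Partitioning the shell into $\sim r^{d-1}$ cells of bounded angular width, the hitting estimate of Lemma~\ref{lawler} shows a given particle reaches a given cell before exiting $\B_{r+a\sqrt{\log r}}$ with probability $O(r^{1-d}\log r)$; a Chernoff bound plus a union bound over cells then shows that with probability $1-O(n^{-2})$ no cell is traversed by more than $O((\log r)^{3/2})$ particles, and the pigeonhole over the at most $2|\partial^{\D_n}A|$ cells meeting the cutset yields $|A|\le C_\star|\partial^{\D_n}A|(\log r)^{3/2}$. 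Without some such use of the particle trajectories (or another source of local regularity of $\D_n$ beyond the limit shape), the outer-heavy case of your argument cannot be closed.
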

\begin{remark}
The result does not follow automatically from Theorem \ref{shape-fluct}, since with high probability $|\D_n\backslash\B_{r-a\sqrt{\log r}}|=O(n^{(d-1)/d}\sqrt{\log n})$, and {\it a-priori} a cut in $\D_n$ can be as small as consisting of only one edge. The point here is to show that randomly produced cluster, here \abbr{IDLA}, is much better structured than deterministic ones under the same shape constraints.

Nevertheless, we believe that the logarithmic correction in \eqref{lower-ch} should not be there. For that reason, we do not state the result for $d=2$.
\end{remark}

Turning to the application to recurrence on $\{\D_n\}$ we have in mind, denote by $\{X_n\}_{n\in\N}$ a {\it lazy} \abbr{SRW} on $\{\D_n\}$ with $X_0=\mathbf 0$. Fixing a realization $\omega$ of $\{\D_n\}$, its $n$-th step transition probability is defined to be 
\begin{align}           \label{srw-def}
\mathbf{P}^\omega(X_{n+1}=y|X_n=x)=
\begin{cases}
(1-\gamma^\omega_n(x))\cdot|\{z\in \D^\omega_n:\, x\sim z\}|^{-1}, \quad &\text{if }x\sim y\in \D^\omega_n, \, x\neq y \\
\gamma_n^\omega(x), \quad & \text{if }x=y\in\D^\omega_n        
\end{cases}
\end{align}
for some $\{\gamma_n^\omega(x)\}$ denoting the probability of staying put.
In words, each step the random walk either stays put or chooses uniformly among neighbors in its current cluster.
Here, $\mathbf{P}^\omega$ denotes the quenched probability governing the random walk $X$, and $\P$ is reserved for the randomness of $\{\D_n\}$. We will add the superscript $\omega$ only when it is helpful to emphasize the randomness of the environment.
We say that $\{X_n\}$ is uniformly lazy with probability $\gamma\in(0,1)$, if $\gamma^\omega_n(x)\ge \gamma$ for all $\omega$, $n$ and $x\in \D^\omega_n$. 
A special case of \cite[Proposition 1.13]{H} in our context is the following heat kernel lower bound for $\{X_n\}$ on $\{\D_n\}$, where the uniform laziness is but a technical assumption.

\begin{ppn}[\cite{H}]   \label{hk-lbd}
Suppose $\{X_n\}$ is uniformly lazy with probability $\gamma\in(0,1)$, and the following conditions on $\{\D_n\}$ are $\P$-a.s. satisfied: \\
(i). There exist some deterministic sequence $\{r(n)\}$, and a.s. finite random variable $\ovl{n} (\omega)$ such that 
\begin{align*}
\D_n&\supset \B_{r(n)}, \quad \forall n \ge \ovl n(\omega)\\
\text{and }\quad \lim_{n\to \infty}&\left\{\frac{|\D_n-\B_{r(n)}|}{|\D_n|}\right\}=0.
\end{align*}
(ii). 
\begin{align} \label{regul}
\liminf_{n\to\infty}\left\{\frac{\sum_{j=\lfloor n/2\rfloor}^{n-1}\Phi(\D_j)^2}{\log n}\right\}=\infty.
\end{align}
Then, there exists some positive $c_0=c_0(d, \gamma)$ such that $\P$-a.s. for all $n\ge (2^d-1)\ovl n(\omega)$,
\begin{align*}
\mathbf{P}^\omega(X_n=\mathbf 0)\ge \frac{c_0}{|\D^\omega_n|}.
\end{align*}
\end{ppn}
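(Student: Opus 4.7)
The plan is to propagate an $L^2$-contraction of the walker's density toward the reversible measure of the lazy walk on $\D_j$, driven by the Cheeger inputs of hypothesis (ii), and then upgrade the resulting $L^2$-closeness to an $L^\infty$ estimate near the origin via a short Nash smoothing step inside the Euclidean ball guaranteed by (i). Fix $\omega$ and write $h_n(x)=\mathbf P^\omega(X_n=x)$; let $\pi_n(x)=\deg_{\D_n}(x)/(2|\cE(\D_n)|)$ be the reversible probability for the $\gamma$-lazy SRW on the fixed graph $\D_n$ (comparable to uniform since degrees lie in $[1,2d]$), and set $u_n:=h_n/\pi_n$. Using reversibility one checks that $u_{j+1}=P_j u_j$ when the graph does not change; Cheeger's inequality for the $\gamma$-lazy chain gives
\begin{equation}\label{eq:prop-contract}
\|P_j u_j - 1\|_{L^2(\pi_j)}^2 \;\leq\; \bigl(1 - c\gamma\Phi(\D_j)^2\bigr)\,\|u_j - 1\|_{L^2(\pi_j)}^2,
\end{equation}
and replacing $\pi_j$ by $\pi_{j+1}$ on $\D_{j+1}=\D_j\cup\{v_j\}$ costs at most a multiplicative factor $1+O(1/j)$ on shared vertices and an additive $O(1/j)$ from $u_{j+1}(v_j)=0$, yielding
\begin{equation}\label{eq:prop-grow}
\|u_{j+1} - 1\|_{L^2(\pi_{j+1})}^2 \;\leq\; (1+C/j)\,\|P_j u_j - 1\|_{L^2(\pi_j)}^2 + C/j.
\end{equation}

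Starting from $j=\lfloor n/2\rfloor$ with the crude bound $\|u_{n/2}-1\|_{L^2(\pi_{n/2})}^2\leq C|\D_{n/2}|$ and iterating \eqref{eq:prop-contract}--\eqref{eq:prop-grow} up to $j=n-1$, the $1+O(1/j)$ prefactors telescope to $O(1)$ and the additive remainder can be bounded by splitting the range at the index where the remaining Cheeger sum first drops below $3\log n$ (using the deterministic lower bound $\Phi(\D_j)\ge c|\D_j|^{-1/d}$ with $d\ge 3$ on the residual piece). The result is
\begin{equation*}
\|u_n - 1\|_{L^2(\pi_n)}^2 \;\leq\; C|\D_n|\exp\Big(-c\gamma\sum_{j=\lfloor n/2\rfloor}^{n-1}\Phi(\D_j)^2\Big) + o(1),
\end{equation*}
and hypothesis (ii) makes the exponent dominate any multiple of $\log n$, so this tends to $0$ along $\P$-a.e. $\omega$.

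The chief obstacle is upgrading $L^2(\pi_n)$-closeness to the pointwise bound $u_n(\mathbf 0)\ge c_0$, since $L^2$ control does not a priori preclude $u_n$ from being small at a single vertex. I would stop the above argument at time $n-k_n$ with $k_n=\lceil C\log n\rceil$ and use the trailing window for smoothing: by (i) every $\D_j$ with $j\in[n-k_n,n]$ contains the common ball $\B_{r(n-k_n)}$ with $r(n-k_n)\to\infty$, and over $O(\log n)$ steps the walk remains inside this ball with probability $1-o(1)$, so its transitions coincide with lazy SRW on $\Z^d$. The identity $u_n - 1 = K(u_{n-k_n}-1)$ (with $K=P_{n-1}\cdots P_{n-k_n}$, acting on $u-1$ extended by zero outside $\D_{n-k_n}$) combined with standard Nash ultracontractivity $\|K f\|_\infty \leq C k_n^{-d/4}\|f\|_{\ell^2(\Z^d)}$ then gives
\begin{equation*}
|u_n(\mathbf 0)-1| \;\leq\; C k_n^{-d/4}\sqrt{|\D_{n-k_n}|\,\|u_{n-k_n}-1\|_{L^2(\pi_{n-k_n})}^2}.
\end{equation*}
Hypothesis (ii) is exactly strong enough to make the right-hand side $o(1)$ (one needs $\sum_j\Phi(\D_j)^2 \gg (2+d/2)\log n$), so $u_n(\mathbf 0)\geq 1/2$ for all large $n$, whence $h_n(\mathbf 0)\geq\tfrac12\pi_n(\mathbf 0)\geq c_0/|\D_n|$ completes the proof.
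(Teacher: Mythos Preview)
The paper does not prove this proposition; it is quoted from \cite{H} as a black box, so there is no in-paper argument to compare against. Your overall strategy---$L^2$-contraction via Cheeger followed by a short $L^2\to L^\infty$ smoothing near the origin---is a natural one, but as written it does not close.

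The first problem is your invocation of a ``deterministic lower bound $\Phi(\D_j)\ge c|\D_j|^{-1/d}$'' to control the additive remainder. This bound is not among the hypotheses of the proposition (indeed, establishing it for \abbr{IDLA} is precisely the content of Theorem~\ref{thm:cheeger}); the proposition is stated for arbitrary growing sequences satisfying (i) and (ii). Under (ii) alone, the additive remainder
\[
\sum_{j=\lfloor n/2\rfloor}^{n-1}\frac{C}{j}\prod_{k>j}\bigl(1-c\gamma\Phi(\D_k)^2\bigr)
\]
need not be $o(1)$: one can build sequences with $\sum_{j=\lfloor n/2\rfloor}^{n-1}\Phi(\D_j)^2\gg\log n$ while $\sum_{k>j_0}\Phi(\D_k)^2=O(1)$ for some $j_0$ with $n-j_0\asymp n$ (concentrate the Cheeger mass near $j=n/2$), so that the terms with $j>j_0$ already contribute $\sum_{j>j_0}C/j\asymp 1$. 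Hence from your iteration you only obtain $\|u_{n-k_n}-1\|_{L^2(\pi)}^2=O(1)$, not $o(1)$.

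This $O(1)$ floor is fatal for your Nash step. Converting from $L^2(\pi_{n-k_n})$ to $\ell^2(\Z^d)$ costs a factor $\sqrt{|\D_{n-k_n}|}\asymp\sqrt{n}$, so your final inequality reads $|u_n(\mathbf 0)-1|\lesssim k_n^{-d/4}\sqrt{n}$. Making this $o(1)$ forces $k_n\gg n^{2/d}$; but for the dual walk from $\mathbf 0$ to remain inside $\B_{r(n-k_n)}$ (so that its transitions coincide with lazy \abbr{SRW} on $\Z^d$) one needs $\sqrt{k_n}\ll r(n-k_n)\asymp n^{1/d}$, i.e.\ $k_n\ll n^{2/d}$. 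No choice of $k_n$ satisfies both constraints, so the argument cannot conclude $u_n(\mathbf 0)\ge 1/2$. To repair this you would need either a much sharper treatment of the additive error from the changing stationary measure (so that $\|u_{n-k_n}-1\|_{L^2(\pi)}^2=o(n^{-1}k_n^{d/2})$ under (i)--(ii) alone), or an $L^2\to L^\infty$ mechanism that does not rely on confining the walk to a ball of radius $n^{1/d}$.
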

The random variable $\ovl n(\omega)$ is determined, thus condition (i) satisfied with $r(n)=(n/\omega_d)^{1/d}$, by Borel-Cantelli applied to the events $\{\cS^c_n\}$ of \eqref{shape-event}. Further, since $|\D_n|=n$, it follows from Theorem \ref{thm:cheeger} that $\P$-a.s. for all $n$ large enough, we have 
\begin{align*}
\sum_{j=\lfloor n/2\rfloor}^{n-1}\Phi(\D_j)^2 \ge c' n^{1-2/d-\ep}, 
\end{align*}
for some $c'>0$ and any $\ep\in(0,1-2/d)$, thereby verifying \eqref{regul} when $d\ge 3$. 

\medskip
We use here the {\it a-priori} weaker definition of recurrence. 
\begin{defn}
We say that $X$ is recurrent if it has infinite expected occupation time at the origin (hence at every other point), i.e. $\mathbf{E}\left[\int_1^\infty\mathbf{1}\{X_t=\mathbf 0\}dt\right]=\infty$. Otherwise, it is called transient.
\end{defn}

\begin{cor}   \label{rec}
For almost every realization $\omega$ of the discrete-time \abbr{IDLA} cluster $\{\D_n\}_{n\in\N}$ on $\Z^d$, $d\ge 3$, the uniformly lazy \abbr{SRW} $\{X_n\}_{n\in\N}$ evolving independently on $\{\D^\omega_n\}$ is recurrent.
\end{cor}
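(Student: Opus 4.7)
The plan is to verify hypotheses (i) and (ii) of Proposition \ref{hk-lbd} using Theorems \ref{shape-fluct} and \ref{thm:cheeger}, and then to integrate the resulting heat-kernel lower bound to conclude that the expected occupation time at the origin is infinite.

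For hypothesis (i), I would apply Borel--Cantelli to the shape events $\{\cS_n^c\}$ of \eqref{shape-event}; since $\sum_n n^{-2}<\infty$, there is an a.s.\ finite $\ovl n(\omega)$ such that $\cS_n$ holds for every $n\ge\ovl n(\omega)$. Setting $r(n):=(n/\omega_d)^{1/d}-a\sqrt{\log((n/\omega_d)^{1/d})}$, the inclusion $\B_{r(n)}\subset\D_n$ is immediate on $\cS_n$, and since $\D_n\setminus\B_{r(n)}$ is contained in an $O(\sqrt{\log n})$-thick shell around the lattice sphere of radius $(n/\omega_d)^{1/d}$, a routine volume comparison gives
\begin{equation*}
\frac{|\D_n\setminus\B_{r(n)}|}{|\D_n|}\;=\;O\!\left(\frac{\sqrt{\log n}}{n^{1/d}}\right)\;\xrightarrow[n\to\infty]{}\;0,
\end{equation*}
as required.

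For hypothesis (ii), Theorem \ref{thm:cheeger} gives, a.s.\ for all $j$ large enough, $\Phi(\D_j)\ge c j^{-1/d}(\log j)^{-3/2}$, hence
\begin{equation*}
\sum_{j=\lfloor n/2\rfloor}^{n-1}\Phi(\D_j)^2\;\ge\;c^2\!\!\sum_{j=\lfloor n/2\rfloor}^{n-1}\!\!\frac{1}{j^{2/d}(\log j)^3}\;\ge\;c'\,\frac{n^{\,1-2/d}}{(\log n)^3}.
\end{equation*}
Because $1-2/d>0$ for $d\ge 3$, the right-hand side grows polynomially, so dividing by $\log n$ still yields a quantity tending to $+\infty$, which is precisely \eqref{regul}.

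With both conditions in force and $|\D_n^\omega|=n$, Proposition \ref{hk-lbd} supplies $\mathbf{P}^\omega(X_n=\mathbf 0)\ge c_0/n$ a.s.\ for every $n\ge(2^d-1)\ovl n(\omega)$. Summing in $n$ and applying Fubini,
\begin{equation*}
\mathbf{E}^\omega\!\left[\sum_{n\ge 1}\mathbf{1}\{X_n=\mathbf 0\}\right]\;=\;\sum_{n\ge 1}\mathbf{P}^\omega(X_n=\mathbf 0)\;\ge\!\!\sum_{n\ge(2^d-1)\ovl n(\omega)}\!\!\frac{c_0}{n}\;=\;+\infty,
\end{equation*}
which is exactly the recurrence criterion in the preceding definition. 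There is no real obstacle at this stage: the deep inputs---Theorem \ref{thm:cheeger} and the heat-kernel framework of \cite{H}---are already in place, and the corollary is a mechanical synthesis of the two together with the Borel--Cantelli verification of the shape/volume condition.
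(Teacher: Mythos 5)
Your proposal is correct and follows essentially the same route as the paper, which likewise treats the corollary as an immediate consequence of Proposition \ref{hk-lbd}: condition (i) via Borel--Cantelli on the shape events $\{\cS_n^c\}$, condition (ii) via Theorem \ref{thm:cheeger} giving $\sum_{j=\lfloor n/2\rfloor}^{n-1}\Phi(\D_j)^2\gtrsim n^{1-2/d-\ep}\gg\log n$, and then summation of the heat-kernel bound $c_0/n$. The only (harmless) difference is your more careful choice of $r(n)$ with the $-a\sqrt{\log}$ correction, where the paper simply takes $r(n)=(n/\omega_d)^{1/d}$.
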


\begin{remark}
Our result does not cover $d=2$, for which we conjecture recurrence. In fact, a more general conjecture currently open asserts that {\it any sequence} of growing subgraphs $\{\G_n\}_{n\in\N}$ of $\Z^2$ is recurrent for the independently evolving \abbr{SRW}  $\{X_n\}_{n\in\N}$ \eqref{srw-def} on it, see \cite[Conjectures 1.10, 1.8]{DHS} or \cite[Conjectures 1.1, 7.1]{ABGK}. 
\end{remark}

\section{Proof}    \label{sec:proof}
For large enough $n=\lfloor\omega_dr^d\rfloor$, the \abbr{IDLA} cluster $\D_n$ concentrates on $\B_r$ with high probability by Theorem \ref{shape-fluct}. We thus denote 
\begin{align*}
\F_n:=\D_n\backslash\B_{r-a\sqrt{\log r}}
\end{align*}
(the part of fluctuation, when such event occurs). Our main goal is to show the following.
\begin{ppn}  \label{fluct-good}
Fix $d\ge 3$. There exist some finite constant $C_\star=C_\star(a,d)$ and integer $n_1$ such that for all $n=\lfloor\omega_dr^d\rfloor\ge n_1$, 
\begin{align}   \label{eq:rare}
\P\left(\exists   A\subset \cV(\F_n):\, |A|\ge C_\star|\partial^{\D_n}A|(\log r)^{3/2}\right)\le 3n^{-2}.
\end{align}
\end{ppn}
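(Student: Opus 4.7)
Since $\P(\cS_n^c)\le n^{-2}$ by Theorem \ref{shape-fluct}, it suffices to bound the probability of the bad event on $\cS_n$ by $2n^{-2}$. On $\cS_n$ the fluctuation layer $\F_n$ is contained in the spherical annulus $\cA_n := \B_{r+a\sqrt{\log r}}\setminus\B_{r-a\sqrt{\log r}}$ of thickness $h:=2a\sqrt{\log r}$. My plan is to establish a \emph{deterministic} isoperimetric inequality $|A|\le C_\star(\log r)^{3/2}|\partial^{\D_n}A|$ for all $A\subset\F_n$ on a further ``regularity'' event $\cR_n$ satisfying $\P(\cR_n^c\cap\cS_n)\le 2n^{-2}$, and then conclude by a union bound.

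The geometric engine I have in mind is a decomposition of $\cA_n$ into $O(r^{d-1})$ thin radial ``tubes'' $\{C_y\}$, indexed by the lattice points $y$ just outside $\B_{r-a\sqrt{\log r}}$. Each $C_y$ starts at $y$ and follows a lattice approximation of the outward radial ray until leaving $\B_{r+a\sqrt{\log r}}$, so that $|C_y|\le h$, with the tubes arranged (by a standard Lipschitz-projection device) to cover $\cA_n$ with bounded multiplicity. Writing $A_y:=A\cap C_y$, the per-tube deterministic claim is: if $A_y\ne\emptyset$ and the trace $C_y\cap\D_n$ is connected as a subgraph of $\D_n$, then $A_y$ contains some vertex of $\partial^{\D_n}A$. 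To see this, walk from the outermost vertex of $A_y$ along $C_y\cap\D_n$ toward the inner ball; the path lies in $\D_n$ throughout, starts in $A$, and must exit $A$ into $\D_n\setminus A$ (either because the next tube-vertex is in $\F_n\setminus A$, or because we reach the inner ball $\B_{r-a\sqrt{\log r}}\subset \D_n\setminus A$), so the last $A$-vertex on this path belongs to $\partial^{\D_n}A$. Summing over nonempty tubes gives $|A|\le h\cdot\#\{y:A_y\ne\emptyset\}\le h\cdot|\partial^{\D_n}A|$.

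The main obstacle, and the reason I expect to lose an extra $\log r$ beyond the natural shell-thickness factor $\sqrt{\log r}$, is establishing $\cR_n$: every tube has connected $\D_n$-trace. This is equivalent to ruling out long radial ``gaps'' in $\D_n$ in every angular direction. I would attack it with a quantitative inner-fluctuation estimate in the spirit of \cite{AG1, AG2, JLS2} controlling, for each fixed direction, the probability that $\D_n$ leaves a gap of length $\gtrsim h$ along the corresponding ray, and then taking a union bound over the $O(r^{d-1})$ tubes. Pushing the individual tail probabilities below $n^{-2}/r^{d-1}$ requires inflating the tolerated gap length by a logarithmic factor, and absorbing this inflation into the deterministic per-tube isoperimetric bound is precisely what produces the final factor $(\log r)^{3/2}$ in lieu of the morally correct $\sqrt{\log r}$.
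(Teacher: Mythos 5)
Your deterministic tube argument is fine as far as it goes, but the probabilistic input it requires --- the regularity event $\cR_n$ that every radial tube has connected trace on $\D_n$ --- is the entire difficulty, and your plan for it does not work. First, there is an internal inconsistency: connectivity of $C_y\cap\D_n$ along a tube fails as soon as there is \emph{any} radial gap, however short. A single vertex $z\notin\D_n$ sandwiched between tube vertices of $\D_n$ already breaks the inward walk, because the last $A$-vertex encountered before the gap need not have any $\D_n$-neighbour outside $A$ and hence need not lie in $\partial^{\D_n}A$. So ruling out only gaps of length $\gtrsim h$ cannot suffice. Second, even long gaps are not excluded by the fluctuation estimates of \cite{AG1,AG2,JLS1,JLS2}: those control the inner radius ($\B_{r-a\sqrt{\log r}}\subset\D_n$) and the outer radius, but say nothing about holes of $\D_n^c$ inside the shell $\B_{r+a\sqrt{\log r}}\setminus\B_{r-a\sqrt{\log r}}$. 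Applying the inner estimate at the time $m$ at which an outer vertex $x$ of radius $\rho$ was added only guarantees occupation of the ray up to radius roughly $\rho-2a\sqrt{\log r}$, which leaves room for a gap of exactly the shell thickness just below $x$; one in fact expects such radial holes to occur with high probability, so $\cR_n$ is likely false, not merely unproven.

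The paper's proof takes a genuinely different route that requires no structural regularity of $\D_n$ inside the shell. It partitions the shell into $M\asymp r^{d-1}$ angular cells, shows via a Green's-function/last-exit computation (Lemma \ref{lawler}) that each of the $n-n'\asymp r^{d-1}\sqrt{\log r}$ late particles enters a fixed cell before leaving $\B_{r+a\sqrt{\log r}}$ with probability $O(\log r/r^{d-1})$, and uses independence of the particles plus a Chernoff bound and a union bound over cells to conclude that no cell is entered by more than $O((\log r)^{3/2})$ particles. The combinatorial hinge is that every vertex of $A\subset\cV(\F_n)$ is settled by a late particle which must first pass through the cutset $\partial^{\D_n}A$, and that cutset meets at most $2|\partial^{\D_n}A|$ cells; the pigeonhole principle then forces some cell to be entered by at least $|A|/(2|\partial^{\D_n}A|)$ particles, which is what the concentration estimate forbids. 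If you wish to pursue your route instead, you would first need a new result on the absence of radial holes in the IDLA fluctuation layer, which is not available in the literature you cite.
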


That this will be sufficient for proving Theorem \ref{thm:cheeger} is due  to a deterministic fact.

\begin{lem}  \label{decomp}
Suppose a connected subgraph $G\subset \Z^d$ is such that $G\supset \B_R$ for some $R>1$. Suppose there exists some positive constant $c_1$ such that $|\partial^GA|\ge c_1|A|(\log R)^{-3/2}$ holds for any $A\subset \cV(G\backslash\B_R)$. Then, there exists another positive constant $c_2=c_2(c_1,d)$ such that $|\partial^GA|\ge c_2 |A|R^{-1}(\log R)^{-3/2}$, for any $A\subset \cV(G)$. 
\end{lem}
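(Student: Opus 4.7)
The plan is to decompose any candidate $A\subset\cV(G)$ (with $|A|\le|\cV(G)|/2$, the regime relevant for the Cheeger constant) along the ball via $A_1:=A\cap\B_R$ and $A_2:=A\setminus\B_R$, and to lower bound $|\partial^G A|$ by combining two tools: the standard linear isoperimetry of the lattice ball, $|\partial^{\B_R}B|\ge c_d R^{-1}|B|$ for any $B\subset\B_R$ with $|B|\le|\B_R|/2$, and the hypothesis of the lemma applied to subsets of $\cV(G\setminus\B_R)$.

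Two elementary facts drive the argument. First, $\partial^{\B_R}A_1\subset\partial^G A$, because any neighbor of $A_1$ lying in $\B_R\setminus A_1$ automatically lies in $\cV(G)\setminus A$; so ball isoperimetry transfers losslessly whenever $|A_1|\le|\B_R|/2$. Second, if $x\in\partial^G A_2\setminus\partial^G A$ then every neighbor of $x$ outside $A_2$ lies in $A_1\cap\partial^*\B_R$, the inner shell of $\B_R$; since each shell vertex has at most $2d$ neighbors outside $\B_R$, one obtains the ``interface loss'' bound $|\partial^G A_2\setminus\partial^G A|\le 2d\,|A_1\cap\partial^*\B_R|\le\min(2d|A_1|,\,CR^{d-1})$.

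With these in hand I case-split. If $|A_1|\le\tfrac{c_1}{4d}|A_2|(\log R)^{-3/2}$, the interface loss is at most half of $|\partial^G A_2|$ and $|A_2|\ge|A|/2$, so the hypothesis yields $|\partial^G A|\gtrsim|A|(\log R)^{-3/2}$, stronger than required. Otherwise $|A_1|\gtrsim|A|(\log R)^{-3/2}$; if moreover $|A_1|\le|\B_R|/2$, ball isoperimetry and the first fact give $|\partial^G A|\ge c_d R^{-1}|A_1|\gtrsim R^{-1}(\log R)^{-3/2}|A|$, which is the claim with $c_2=c_dc_1/(8d)$. The residual case $|A_1|>|\B_R|/2$ (so $A$ swallows most of $\B_R$ and the target is of order $R^{d-1}(\log R)^{-3/2}$) is handled by dualizing: the assumption $|A|\le|\cV(G)|/2$ together with $|A_1|>|\B_R|/2$ forces $|A_2|<|\cV(G\setminus\B_R)|/2$, so $B_2:=\cV(G\setminus\B_R)\setminus A_2$ satisfies $|B_2|>|\cV(G\setminus\B_R)|/2$ and the hypothesis applies to it: $|\partial^G B_2|\ge c_1|B_2|(\log R)^{-3/2}$. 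Each vertex of $\partial^G B_2$ is either adjacent to some $w\in A$ (in which case $w\in\partial^G A$, and any single $w$ absorbs at most $2d$ such contributors) or adjacent only to $B_1:=\B_R\setminus A_1$ (in which case it lies in $\partial_+\B_R$, and such vertices number $\lesssim R^{d-1}$). A further subsplit on whether $|B_1|\ge R^d(\log R)^{-3/2}$ (closed by ball isoperimetry on $B_1$, with a $1/(2d)$ edge-to-vertex loss) or $|B_1|$ is smaller (then $|B_2|\gtrsim R^d$ and the $R^{d-1}$ loss is absorbed by the main $B_2$ term) completes the argument.

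The main obstacle is precisely this interface accounting: the surface $\sim R^{d-1}$ of $\partial\B_R$ appears as a loss whenever one transfers isoperimetric information between a set in $\cV(G\setminus\B_R)$ and its counterpart in $\cV(G)$, and one must ensure in every subcase that the loss is dominated by either the ball-Cheeger term or the hypothesis term. The nested case split -- on $|A_1|$ versus a $(\log R)^{-3/2}$-scaled fraction of $|A|$, and in the dualized branch on $|B_1|$ versus $R^d(\log R)^{-3/2}$ -- is engineered so that this domination holds in every regime, with the extra $(\log R)^{3/2}$ slack provided by the hypothesis exactly covering the mismatch with the $R^{-1}$ factor coming from the ball.
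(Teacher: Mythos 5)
Your argument is correct in the regime $|A|\le|\cV(G)|/2$, which is the only regime in which the stated conclusion can hold (take $A=\cV(G)$ otherwise) and the only one needed for the Cheeger constant, and it follows the same basic strategy as the paper: split $A$ into $A_1=A\cap\cV(\B_R)$ and $A_2=A\setminus\cV(\B_R)$, feed $A_1$ to the ball isoperimetry and $A_2$ to the hypothesis, and account for the interface. The executions differ in two respects. The paper's interface accounting is a single inequality rather than a nested case split: writing $Q_1$ for the vertices of $A_1$ adjacent to $A_2$ and $Q_2$ for the vertices of $\partial^GA_2$ adjacent to $A_1$, it notes $|Q_2|\le(2d-1)|Q_1|\le(2d-1)|A_1|\le (2d-1)c_d^{-1}R\,|\partial^{\B_R}A_1|$, so the ball term always repays the interface loss at the price of one factor of $R$ --- affordable since the target already carries $R^{-1}$ --- and the only dichotomy is $|A_1|\ge|A_2|$ versus $|A_1|<|A_2|$. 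You instead pay for the interface out of the $(\log R)^{3/2}$ slack, which forces your thresholds but reaches the same conclusion. Second, your dualization for $|A_1|>|\B_R|/2$ treats a point the paper passes over silently: its appeal to $\Phi(\B_R)\ge c_dR^{-1}$ is literally valid only when $|A_1|\le|\B_R|/2$, so on this point your write-up is the more complete one. One thing you should make explicit in that residual branch: you use that the target $|A|R^{-1}(\log R)^{-3/2}$ is $O(R^{d-1}(\log R)^{-3/2})$, i.e.\ that $|\cV(G)|=O(R^d)$. This is not among the stated hypotheses, but it does follow from them: applying the assumed inequality to $A=\cV(G\backslash\B_R)$, whose inner boundary relative to $G$ lies in the outer shell of $\B_R$, gives $|\cV(G\backslash\B_R)|\le c_1^{-1}(\log R)^{3/2}\cdot O(R^{d-1})$, hence $|\cV(G)|=O(R^d)$ for $R$ large. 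With that observation inserted, your proof is complete.
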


\begin{proof}[Proof of Lemma \ref{decomp}]
Fixing $A\subset \cV(G)$ a vertex subset, we denote $A_1=A\cap\cV(\B_R)$ and $A_2= A\cap\cV(\B_R^c)$ such that $A=A_1\uplus A_2$.  We further denote $Q_2:=\{x\in \partial^G A_2:\, \exists y\in A_1  \text{ such that }x\sim y\}$ and $Q_1:=\{x\in A_1:\, \exists y\in A_2 \text{ such that } x\sim y\}$.  Trivially $|Q_2|\le (2d-1)|Q_1|$. 

Recall that by the optimal isoperimetry satisfied by lattice balls, $\Phi(\B_R)\ge c_dR^{-1}$ for some $c_d>0$ and any $R>1$, hence $|\partial^{\B_R}A_1|\ge c_dR^{-1}|A_1|$. Further, we can decompose $\partial^GA$ as follows (where $\uplus$ denotes disjoint union)
\begin{align*}
\partial^GA\supset\partial^{\B_R}A_1\uplus(\partial^GA_2 \backslash Q_2). 
\end{align*}
We separately discuss two cases. (a). If $|A_1|\ge |A_2|$, then clearly we have that 
\begin{align*}
|\partial^GA|\ge |\partial^{\B_R}A_1|\ge c_dR^{-1}|A_1|\ge \frac{1}{2}c_dR^{-1}|A|. 
\end{align*}
(b). Now consider $|A_1|<|A_2|$. Since 
\begin{align*}
|\partial^{\B_R}A_1|\ge c_dR^{-1}|A_1|\ge c_dR^{-1}|Q_1|\ge c_d(2d-1)^{-1}R^{-1}|Q_2|,
\end{align*}
we have that
\begin{align*}
|\partial^GA|&\ge |\partial^{\B_R}A_1|+|\partial^GA_2 \backslash Q_2|\ge c_d(2d-1)^{-1}R^{-1}|Q_2|+|\partial^GA_2 \backslash Q_2|  \\
&\ge c_d(2d-1)^{-1}R^{-1}|\partial^GA_2|\ge c_d(2d-1)^{-1}R^{-1}c_1|A_2|(\log R)^{-3/2}\\
&\ge \frac{1}{2}c_1c_d(2d-1)^{-1}|A|R^{-1}(\log R)^{-3/2}.
\end{align*}
Combining both cases proves the claim.
\end{proof}

Our proof relies on partitioning a discrete sphere into subsets of comparable size and diameter. There may be more than one way to achieve this. Here we utilize a notion of {\it almost regular partition} of the Euclidean unit sphere $S^{d-1}:=\{\mathsf{x}\in\R^d:\, \|\mathsf{x}\|_2=1\}$.
\begin{lem}[{\cite[Theorem 6.4.2]{DX}}]    \label{fair-par}
For every integer $M$, there exists a partition $\{\mathsf{T}_1, \mathsf{T}_2, ..., \mathsf{T}_M\}$ consisting of closed subsets of $S^{d-1}$ that satisfies the following:\\
(i). $S^{d-1}=\cup_{j=1}^M\mathsf{T}_j$ and $\mathsf{T}^\circ_i\cap \mathsf{T}^\circ_j=\emptyset$ whenever $i\neq j$, where $\mathsf{T}^\circ_j$ denotes the interior of $\mathsf{T}_j$. \\
(ii). $\sigma_d(\mathsf{T}_j)=M^{-1}\sigma_d(S^{d-1})$, for any $j=1,...,M$, where $\sigma_d(\cdot)$ denotes surface area measure.  \\
(iii). For some positive $c_p=c_p(d)$, the partition norm 
\begin{align*}
\max_{j=1}^M\max_{\mathsf{x,y}\in \mathsf{T}_j}\|\mathsf{x-y}\|_2 \le c_pM^{-\frac{1}{d-1}}.
\end{align*}
\end{lem}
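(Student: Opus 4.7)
The plan is to induct on the dimension $d$ using a recursive zonal equal-area construction. The base case $d=2$ is immediate: partition the circle $S^1$ into $M$ arcs of equal length $2\pi/M$, each of chord length at most $2\pi M^{-1}$.

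For $d\ge 3$, I parametrize $S^{d-1}$ by geodesic polar coordinates $\mathsf{x}=(\cos\theta,\sin\theta\cdot\omega)$ with $\theta\in[0,\pi]$ and $\omega\in S^{d-2}$, then first slice $S^{d-1}$ into $L:=\lceil c_1 M^{1/(d-1)}\rceil$ latitude zones $\mathsf{Z}_1,\ldots,\mathsf{Z}_L$ of equal angular width $\pi/L$. This forces the $\theta$-extent of every zone to be $O(M^{-1/(d-1)})$. The $\sigma_d$-area of $\mathsf{Z}_k$ is comparable to $L^{-1}\sin^{d-2}\bar\theta_k$ (where $\bar\theta_k$ is the midpoint latitude of zone $k$), and I subdivide $\mathsf{Z}_k$ into $N_k:=\lceil c_2(L\sin\bar\theta_k)^{d-2}\rceil$ tiles by invoking the inductive hypothesis on the subsphere $S^{d-2}$ describing $\omega$, with the dimensional constants $c_1,c_2=c(d)$ chosen so that $\sum_k N_k=M$ and each tile carries area exactly $\sigma_d(S^{d-1})/M$.

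The Euclidean diameter of each tile splits into a $\theta$-contribution $O(L^{-1})=O(M^{-1/(d-1)})$ and an $\omega$-contribution bounded by $\sin\bar\theta_k$ times the inductive diameter bound $c_p'N_k^{-1/(d-2)}$ on $S^{d-2}$; substituting $N_k\asymp(L\sin\bar\theta_k)^{d-2}$ collapses the $\omega$-contribution to $O(L^{-1})=O(M^{-1/(d-1)})$ uniformly in $k$. Near the poles, where $\sin\bar\theta_k$ is small and $N_k$ degenerates, the corresponding ``tiles'' are simply spherical caps of geodesic radius $\pi/L$, whose Euclidean diameters are at most $2\sin(\pi/L)=O(M^{-1/(d-1)})$, so the bound holds automatically there.

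The main technical nuisance is that $N_k$ must be integer and $\sum_k N_k$ will not automatically equal $M$ nor yield perfectly equal-area tiles. The standard remedy, which is what is carried out in detail in \cite[Theorem 6.4.2]{DX}, is to perturb the zone cut angles $\theta_k$ by $O(L^{-2})$ to adjust each $\sigma_d(\mathsf{Z}_k)$ enough to absorb the rounding, then distribute any residual area mismatch across neighboring tiles within each zone. These perturbations affect diameters only by a bounded multiplicative constant, absorbed into the final $c_p=c_p(d)$. The bookkeeping is purely combinatorial; no further analytic input is needed.
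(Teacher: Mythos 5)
The paper does not prove this lemma at all: it is quoted verbatim from \cite[Theorem 6.4.2]{DX} and used as a black box. Your proposal reconstructs the standard recursive zonal equal-area construction (the one behind Dai--Xu, Feige--Schechtman, and Leopardi's partitions), and the skeleton is correct: equal arcs on $S^1$ as the base case; latitude zones of angular width $\asymp M^{-1/(d-1)}$; $N_k\asymp(L\sin\bar\theta_k)^{d-2}$ subtiles per zone via the inductive partition of $S^{d-2}$; and the diameter computation $\sin\bar\theta_k\cdot N_k^{-1/(d-2)}=O(L^{-1})$ together with the polar caps is exactly right, as is the count $\sum_k N_k\asymp L^{d-1}\asymp M$. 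Two caveats. First, you defer the genuinely delicate step --- making the tiles \emph{exactly} equal in area while summing to exactly $M$ --- back to \cite[Theorem 6.4.2]{DX}, i.e.\ to the very statement being proved; as a self-contained proof this is circular, though harmless here since the paper itself only ever invokes the result as a citation. Second, your claim that the zone-boundary perturbations are $O(L^{-2})$ is not uniform: to shift a zone's area by one tile's worth ($\asymp L^{-(d-1)}$) near the poles, where $\sin\theta\asymp L^{-1}$, requires moving the cut angle by $\asymp L^{-1}$, comparable to the zone width itself. The conclusion survives because those zones contain $O(1)$ tiles and are contained in caps of geodesic radius $O(L^{-1})$, so all diameters remain $O(M^{-1/(d-1)})$ after adjustment --- but the bound you state for the perturbation is the wrong one, and the correct bookkeeping (fix the polar caps first, then round the per-collar counts and re-fit the collar boundaries) is precisely the content of the cited theorem.
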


Such area regular partition of $S^{d-1}$ naturally induces a partition of any discrete sphere.
\begin{defn}   \label{dis-par}
For a discrete sphere $\partial\B_R\subset\Z^d$, $R>1$, we call $\{T_1, T_2, ..., T_M\}$ its area regular partition, if $x\in T_j$ whenever $x/\|x\|_2\in \mathsf{T}_j$, for every $x\in\partial\B_R$ and $j=1,...,M$, where $\{\mathsf{T}_1, ..., \mathsf{T}_M\}$ is an area regular partition of $S^{d-1}$ in the sense of Lemma \ref{fair-par}. 
\end{defn}

We define the notion of a cone in $\R^d$ determined by the origin and  a closed subset of $S^{d-1}$.
\begin{defn}   \label{def:cone}
We say that $\mathsf{K}=\mathsf{K}(\mathsf{T})\subset\R^d$ is an infinite cone determined by a closed subset $\mathsf{T}\subset S^{d-1}$ and the origin, if 
\begin{align*}
\mathsf{K}(\mathsf{T}):=\{\mathsf{x}\in \R^d:\, \mathsf{x/\|x\|_2} \in\mathsf{T}\}. 
\end{align*}
\end{defn}

We then define the notion of a cell which is the intersection of a cone and an annulus in $\R^d$.
\begin{defn}     \label{def:cell}
We say that $\mathsf{C}=\mathsf{C}(\mathsf{A}, \mathsf{T})\subset\R^d$ is a cell determined by an annulus $\mathsf{A}=\mathsf{A}(R, R'):=\ovl{\mathbf{B}}_R\backslash\mathbf{B}_{R'}$, where $R>R'$, and a closed subset $\mathsf{T}\subset S^{d-1}$, if $\mathsf{C}=\mathsf{A}\cap\mathsf{K}$ where $\mathsf{K}=\mathsf{K}(\mathsf{T})$ is the infinite cone determined by $\mathsf{T}$ and the origin in the sense of Definition \ref{def:cone}.
\end{defn}

We also need a lemma for a specific hitting probability of (ordinary) \abbr{srw} on $\Z^d$. We defer its proof to the end of the section.
\begin{lem}   \label{lawler}
Fix $d\ge 3$. Let $\mathsf{C}\subset\R^d$ be a cell determined by the annulus $ \mathsf{A}(R+\delta\sqrt{\log R}, R)$  and a closed subset $\mathsf{T}\subset S^{n-1}$ with $\max_{\mathsf{x,y}\in \mathsf{T}}\|\mathsf{x-y} \|_2\le c_p/R$, as in Definition \ref{def:cell},
where $R, \delta, c_p>1$. We further denote $\C=\mathsf{C}\, \cap\, \Z^d$ the discrete cell, and $Y$ a \abbr{SRW} on $\Z^d$. Then, there exist  finite constants $R_1=R_1(d)$ and $C'=C'(d,\delta, c_p)$ such that for any $R>R_1$,
\begin{align}    \label{hit-cell}
\P_\mathbf 0(Y \text{ hits } \C \text{ before }\B^c_{R+\delta\sqrt{\log R}})\le C'\frac{\log R}{R^{d-1}},
\end{align}
where $\P_v$ indicates the law of $Y$ when starting from $v\in\Z^d$.
\end{lem}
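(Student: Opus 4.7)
The plan is to combine a union bound over $y \in \C$ with a standard Green's-function estimate for simple random walk killed upon exit from $\B_{R'}$, where $R' := R + \delta\sqrt{\log R}$ and $\tau := \min\{n \ge 0 : Y_n \notin \B_{R'}\}$. Setting $G_{\B_{R'}}(x, y) := \E_x\bigl[\sum_{n=0}^{\tau-1}\Ind{Y_n = y}\bigr]$ and writing $T_y$ for the first visit of $Y$ to $y$, a strong-Markov argument at $T_y$ gives $\P_x(T_y < \tau)\, G_{\B_{R'}}(y, y) = G_{\B_{R'}}(x, y)$. Since $G_{\B_{R'}}(y, y) \ge 1$ (the walk starting at $y$ is at $y$ at time $0$), a union bound yields
\begin{equation*}
\P_\mathbf 0\bigl(Y \text{ hits } \C \text{ before } \tau\bigr) \;\le\; \sum_{y \in \C} G_{\B_{R'}}(\mathbf 0, y) \;\le\; |\C| \cdot \max_{y \in \C} G_{\B_{R'}}(\mathbf 0, y),
\end{equation*}
and the task is to bound each of these two factors by $C\sqrt{\log R}$ and $C\sqrt{\log R}/R^{d-1}$, respectively.

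For $|\C|$: since $\mathsf{T}$ has Euclidean diameter at most $c_p/R$, the tangential extent of $\mathsf{C}$ at any radius $\rho \in [R, R']$ is at most $\rho\, c_p/R \le 2 c_p$ once $\delta\sqrt{\log R}/R \le 1$. Hence $\mathsf{C}$ fits inside a curvilinear slab of tangential sides at most $2 c_p$ and radial side $\delta\sqrt{\log R}$, and a direct lattice-point count gives $|\C| \le C_1(d, \delta, c_p)\sqrt{\log R}$ for $R$ large. For the Green's-function maximum, every $y \in \C$ has $|y| \ge R$ and lies within distance $\delta\sqrt{\log R}$ of $\partial \B_{R'}$. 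I will apply the classical Green's-function asymptotic for SRW killed on exit from a lattice ball (cf.\ Lawler--Limic, Ch.\ 6): for $y \in \B_{R'}$ with $|y| \ge 1$,
\begin{equation*}
G_{\B_{R'}}(\mathbf 0, y) \;\le\; C_2(d)\bigl(|y|^{2-d} - R'^{2-d}\bigr).
\end{equation*}
By the mean value theorem applied to $\rho \mapsto \rho^{2-d}$ on $[R, R']$, combined with $|y|^{2-d} \le R^{2-d}$, one obtains $|y|^{2-d} - R'^{2-d} \le (d-2)\,\delta\sqrt{\log R}\, R^{1-d}$ for all $R \ge R_1(d)$, hence $\max_{y \in \C} G_{\B_{R'}}(\mathbf 0, y) \le C_3(d, \delta)\sqrt{\log R}/R^{d-1}$. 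Multiplying the two bounds produces the desired inequality with $C' = C_1 C_3$.

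The main (and essentially the only) obstacle is the Green's-function estimate itself. In the continuum, the exact formula $G^{\mathrm{BM}}_{\mathbf B_{R'}}(\mathbf 0, y) = c_d(|y|^{2-d} - R'^{2-d})$ follows from harmonicity together with Newton's shell theorem, which states that the average of $|z - y|^{2-d}$ over the sphere $\partial \mathbf B_{R'}$ equals $R'^{2-d}$ whenever $|y| < R'$. On the lattice, two corrections must be absorbed into the constants: the roughness of $\partial \B_{R'}$, so that the distribution of $Y_\tau$ is only approximately uniform on the Euclidean sphere, and the $O(|z|^{-d})$ correction in the potential-kernel asymptotic $a(z) = \kappa_d|z|^{2-d} + O(|z|^{-d})$. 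The cleanest route is the martingale identity $G_{\B_{R'}}(\mathbf 0, y) = a(y) - \E_\mathbf 0[a(y - Y_\tau)]$ obtained from optional stopping on $a(Y_n - y) + \sum_{m < n}\Ind{Y_m = y}$; both error contributions are of strictly smaller order than the main term $\sqrt{\log R}/R^{d-1}$ once $R$ is large, and can be absorbed into the constants $C_2$ and $C_3$.
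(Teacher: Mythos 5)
Your argument is correct, and it reaches the bound by a slightly different decomposition than the paper. The paper works with the slit domain $\K=\B_{R+\delta\sqrt{\log R}}\setminus\C$ and bounds the harmonic measure $H_{\K}(\mathbf 0,x)$ at each $x\in\partial\C$ via a last-exit decomposition through $\partial\B_{R/2}$ (following \cite[Lemma 6.3.7]{LL}): a Green's function factor $\mathsf G_{\K}(z,\mathbf 0)\le C R^{2-d}$ on $\partial\B_{R/2}$ times a gambler's-ruin factor $\le C\delta\sqrt{\log R}/R$ for re-entering $\B_{R/2}$ from the boundary, then sums over $\partial\C$. You instead never modify the domain: you union-bound over $y\in\C$ the probabilities $\P_{\mathbf 0}(T_y<\tau)\le \mathsf G_{\B_{R'}}(\mathbf 0,y)$ and invoke the ball Green's function asymptotic directly. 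Both routes rest on the same two quantitative inputs --- a lattice-point count of order $\sqrt{\log R}$ for the cell (resp.\ its boundary) and a per-point estimate of order $\sqrt{\log R}\,R^{1-d}$ --- so the final constants are comparable; your version is marginally more elementary in that it avoids introducing $\K$ and the last-exit identity, at the cost of leaning on the near-boundary form of \cite[Proposition 6.3.5]{LL}. One small imprecision: the purely multiplicative bound $\mathsf G_{\B_{R'}}(\mathbf 0,y)\le C_2(d)\,(|y|^{2-d}-R'^{2-d})$ cannot hold uniformly, since for $y$ within $O(1)$ of $\partial\B_{R'}$ the right side can vanish while the left side is of order $R^{1-d}$; the correct uniform statement, obtained from $\mathsf G_{\B_{R'}}(\mathbf 0,y)=\mathsf G(y)-\E_{\mathbf 0}[\mathsf G(Y_\tau-y)]$ together with $R'\le |Y_\tau|<R'+1$, carries an additive error $O(R^{1-d})$. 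You acknowledge this in your last paragraph, and it is harmless here because the additive error is dominated by the target $\sqrt{\log R}\,R^{1-d}$ once $R$ is large --- but it should be absorbed into $C_3$ (the final constant), not into $C_2$ as a prefactor of a possibly vanishing main term.
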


We now prove the main proposition.

\begin{proof}[Proof of Proposition \ref{fluct-good}]
For brevity, we write 
\begin{align*}
n':=\lfloor\omega_d(r-2a\sqrt{\log r})^d\rfloor. 
\end{align*}
First note that, under the event $\cS_{n'}$ (see \eqref{shape-event}) we have $\F_n\cap\D_{n'}=\emptyset$ and $\D_{n'}\supset \B_{r-3a\sqrt{\log r}}$. Further, since $\cS_{n'}$ depends only on the first $n'$ injected particles, conditioning on $\cS_{n'}$ does not alter the independence of subsequent particles injections.  

By Lemma \ref{fair-par} and Definition \ref{dis-par}, there exists an area regular partition $\{\mathsf{T}_1, ..., \mathsf{T}_M\}$ of $S^{d-1}$  with $M=M(n):=\lfloor c'r^{d-1}\rfloor$ for any $c'>0$ chosen fixed, and its induced partition $\{T_1, ..., T_M\}$ of $\partial \B_{r-3a\sqrt{\log r}}$ such that $\|x-y\|_2\le c_p$ for some finite constant $c_p=c_p(d, c')>1$ and any $x, y\in T_j$, $j=1, ..., M$. Each partition block $\mathsf{T}_j$, $j=1,...,M$ and the annulus $\mathsf{A}(r+a\sqrt{\log r}, r-3a\sqrt{\log r})$ determines a cell $\mathsf{C}_j=\mathsf{C}_j(\mathsf{A}, \mathsf{T}_j)$ in the sense of Definition \ref{def:cell}. We then denote the discrete cells $\C_j:=\mathsf{C}_j\cap\Z^d$, $j=1,...,M$.

Let us focus on each  fixed cell $\C_j, \,  j\in\{1,...,M\}$. We compute the probability that the $m$-th particle $Y_m$, $m\in[n', n]$, upon being injected from the origin, hits $\C_j$ before $\D_m^c\backslash\C_{j}$, conditional on $\cS_{n'}$. To this end, we define the events
\begin{align}   \label{hit-event}
E^{(j)}_m:=\{Y_m \text{ hits }\C_j \text{ before }\D^c_m\backslash\C_{j}\}, \quad  n'<m\le n, \, j\in\{1,..,M\}.
\end{align}
For each $m\in[n', n]$, on the same probability space $(\Omega, \cF, \P)$ we define $Y_m^{\text{EXT}}$ to be a \abbr{SRW} evolving on $\Z^d$ obtained by continuing to run the \abbr{IDLA} particle $Y_m$ even after it has exited $\D_m$ (and hence settled).
Since $\D_m\subset\D_n$ and $\D_m^c\backslash\C_j\supset\B^c_{r+a\sqrt{\log r}}$, for $n'<m\le n$, we have the following inclusion of events
\begin{align}    \label{event-incl}
E^{(j)}_m\cap \{\D_n\subset \B_{r+a\sqrt{\log r}}\} \subset \wt E^{(j)}_m\cap \{\D_m\subset \B_{r+a\sqrt{\log r}}\}\subset \wt E^{(j)}_m,
\end{align}
where
\begin{align*}
\wt E^{(j)}_m:=\{Y^{\text{EXT}}_m \text{ hits }\C_j \text{ before }\B^c_{r+a\sqrt{\log r}}\}, \quad  n'<m\le n, \, j\in\{1,...,M\}.
\end{align*}
By Lemma \ref{lawler}, there exists some finite constant $C'=C'(d, a, c_p)$ such that for any $n'<m\le n$ and $j\in\{1,...,M\}$, 
\begin{align}    \label{bern-dom}
\P(\wt E_m^{(j)}\cap \{\D_n\subset \B_{r+a\sqrt{\log r}}\} \, | \, \cS_{n'})\le \P(\wt E^{(j)}_m \, | \, \cS_{n'})\le C'\frac{\log r}{r^{d-1}}.
\end{align}
Since the particles trajectories $\{Y_m: n'<m\le n\}$ are independent prior to settling, in view of \eqref{event-incl}, for each fixed $j\in\{1,...,M\}$, the events $\{E^{(j)}_m\cap \{\D_n\subset \B_{r+a\sqrt{\log r}}\}:\, n'<m\le n\}$ (when conditioned on $\cS_{n'}$) are stochastically dominated from above by a sequence of independent Bernoulli's with success probability at most the \abbr{RHS} of \eqref{bern-dom}. Since also $n-n'=c_*(d,a)r^{d-1}\sqrt{\log r}$ for some explicit constant $c_*(d,a)\in(0, \infty)$, we have by Chernoff bound
\begin{align}     \label{total-pass}
\P\Big[\Big\{\sum_{m=n'+1}^n\mathbf{1}\{E^{(j)}_m\}\ge 2C'c_\star(\log r)^{3/2}\Big\}\cap\{\D_n\subset \B_{r+a\sqrt{\log r}}\}\, |\, \cS_{n'}\Big]\le e^{-C'c_\star(\log r)^{3/2}}. 
\end{align}

Since $A\subset\cV(\F_n)$ and $\F_n\cap\D_{n'}=\emptyset$, the particle $Y_k$ that eventually settles on some $x\in A$ must have index $n'<k=k(x)\le n$. Further, since $\partial^{\D_n}A$ is a vertex-cutset that separates $A$ from the origin, such $Y_k$ must pass through $\partial^{\D_n}A$ at least once before settling on $x\in A$. Let us denote by $v=v(x)$ the first vertex in $\partial^{\D_n}A$ that $Y_k$ hits, before settling at $x$. Since $v\in\cV(\F_n)$, it belongs to some cell $\C_{j(v)}$, where $j(v)\in\{1,...,M\}$. Consequently, such $Y_k$ in order to settle on $x\in A$ must hit $\C_{j(v)}$ before $\D_k^c\backslash\C_{j(v)}$ (since otherwise $Y_k$ already settles on the first vertex it encounters in $\D_k^c\backslash\C_{j(v)}$ and will not hit $v$ hence will not settle on $x$).

With $A\subset\cV(\F_n)$, $\partial^{\D_n}A$ cannot intersect more than $2|\partial^{\D_n}A|\wedge M$ number of cells, as the cells are vertex-disjoint except possibly at their boundaries.  Further, every $x\in A$ is settled by some particle $Y_{k(x)}$.
Thus, the event $\{\exists A\subset\cV(\F_n):\, |A|\ge 4C'c_\star|\partial^{\D_n}A|(\log r)^{3/2}\}$ implies, by pigeonhole principle, that there exists at least one cell, say $\C_{j*}$, for which the event $\{\sum_{m=n'+1}^n\mathbf{1}\{E^{(j^*)}_m\}\ge 2C'c_\star(\log r)^{3/2}\}$ occurs.  Due to in total $M=\lfloor c'r^{d-1}\rfloor$ number of cells, we obtain by \eqref{total-pass} and union bound that
\begin{align*}
&\P\big(\{\exists A\subset \cV(\F_n):\, |A|\ge 4C'c_\star|\partial^{\D_n}A|(\log r)^{3/2}\} \cap\{\D_n\subset \B_{r+a\sqrt{\log r}}\} \, |\, \cS_{n'}\big) \\
&=\P\Big(\Big\{ \exists j^*\in\{1,..,M\}:\, \sum_{m=n'+1}^n\mathbf{1}\{E^{(j^*)}_m\}\ge 2C'c_\star(\log r)^{3/2} \Big\}\cap\{\D_n\subset \B_{r+a\sqrt{\log r}}\} \, |\, \cS_{n'}\Big)  \\
&\le \sum_{j=1}^M\P\Big[\Big\{\sum_{m=n'+1}^n\mathbf{1}\{E^{(j)}_m\}\ge 2C'c_\star(\log r)^{3/2}\Big\}\cap\{\D_n\subset \B_{r+a\sqrt{\log r}}\}\, |\, \cS_{n'}\Big]    \\
& \le c'r^{d-1}e^{-C'c_*(\log r)^{3/2}}.
\end{align*}
Noticing that $\{\D_n\subset \B_{r+a\sqrt{\log r}}\}^c\subset \cS_n^c$, we can further bound
\begin{align*}
&\P(\exists A\subset \cV(\F_n):\,  |A|\ge 4C'c_\star|\partial^{\D_n}A|(\log r)^{3/2}) \\
  &\le \P(\cS_{n'}^c\cup\cS_{n}^c)\, +   
\P\big(\{\exists A\subset \F_n:\, |A|\ge 4C'c_\star|\partial^{\D_n}A|(\log r)^{3/2}\}\cap\{\D_n\subset \B_{r+a\sqrt{\log r}}\} \, |\, \cS_{n'}\big).
\end{align*}
Upon taking $n$ large enough such that $n'\ge n_0$, by Theorem \ref{shape-fluct} we have that
\begin{align*}
\P(\exists A\subset \cV(\F_n):\, & |A|\ge 4C'c_\star|\partial^{\D_n}A|(\log r)^{3/2}) \le 2n^{-2}+c'r^{d-1}e^{-C'c_*(\log r)^{3/2}}.
\end{align*}
which can be made smaller than $3n^{-2}$ upon taking $n\ge n_1$ for some large enough $n_1$. 
\end{proof}

\begin{proof}[Proof of Theorem \ref{thm:cheeger}]
By Proposition \ref{fluct-good} and Theorem \ref{shape-fluct}, there exist positive constant $c_1=c_1(a,d)=1/C^\star$ and finite integer $n_1$, such that for all $n=\lfloor\omega_dr^d\rfloor\ge n_1$,
\begin{align*}
\P\left(\big\{\forall A\subset \cV(\F_n): \, |\partial^{\D_n}A|\ge c_1|A|(\log r)^{-3/2}\big\}\cap\cS_n\right)\ge 1-4n^{-2}.
\end{align*}
Combining with Lemma \ref{decomp}, there exists another positive constant $c_2=c_2(c_1, d)$ such that
\begin{align*}
\P\left(\forall A\subset \cV(\D_n): \, |\partial^{\D_n}A|\ge c_2|A|r^{-1}(\log r)^{-3/2}\right)\ge 1-4n^{-2}.
\end{align*}
By Borel-Cantelli, the \abbr{LHS} event occurs a.s. for all large enough $n$, namely \eqref{lower-ch} holds.
\end{proof}

We are left to prove Lemma \ref{lawler}. First, we clarify some additional notations. Since we work here with \abbr{SRW} $Y$ on $\Z^d$, without incurring ambiguity we identity any subgraph with its vertex set. Further, we denote by $\tau(A)$ and $H_A(\cdot, \cdot)$, respectively, the first exit time (discounting time $0$) and exit distribution of $A\subset \Z^d$ by a \abbr{SRW}:
\begin{align*}
\tau(A):=\inf\{k\ge 1:\, Y_k\in A^c\}, \quad \quad H_A (x,y):=\P(Y_{\tau(A)}=y|Y_0=x).
\end{align*}
Recall that Green's function on $\Z^d$, $d\ge 3$, is harmonic except at the origin, and satisfies the asymptotics as $\|x\|_2\to\infty$
\begin{align}   \label{green}
\mathsf G(x)=\E_x\Big[\sum_{k=0}^\infty \mathbf{1}\{Y_k=\mathbf 0\}\Big]=\mathsf C_d\|x\|_2^{2-d}+O(\|x\|_2^{1-d}), \quad  \text{where} \quad  \mathsf C_d:=\frac{2}{(d-2)\omega_d}.
\end{align}
One also defines Green's function restricted to $A\subset \Z^d$ as the symmetric function 
\begin{align*}
\mathsf G_A(x,y)=\E_x\Big[\sum_{k=0}^{\tau(A)-1}\mathbf{1}\{Y_k=y\}\Big].
\end{align*}

\begin{proof}[Proof of Lemma \ref{lawler}]
The proof is similar to \cite[Lemma 6.3.7]{LL}. By scaling relations, the property of $\mathsf{T}\subset S^{d-1}$ transfers to the fact that any two vertices on the lower base of the discrete cell $\C$ (on $\partial\B_R$) satisfies $\|x-y\|_2\le c_p$.  In particular, the lower base of the cell is contained in a spherical cap of radius $c_p$ on $\partial\B_R$. Denoting $\K:=\B_{R+\delta\sqrt{\log R}}\backslash \C$, we have that $\B_R\subset \K\subset \B_{R+\delta\sqrt{\log R}}$.

It is a consequence of last exit decomposition, see \cite[Lemma 6.3.6]{LL}, that for any $x\in \partial (\K^c)$, i.e. the {\it outer} vertex boundary of $\K$,
\begin{align}   \label{hit-dist}
H_{\K}(\mathbf 0,x)=\sum_{z\in \B_{R/2}}\mathsf G_{\K}(z,\mathbf 0)\P_x(Y_{\tau(\K\backslash\B_{R/2})}=z)=\sum_{z\in \partial\B_{R/2}}\mathsf G_{\K}(z, \mathbf 0)\P_x(Y_{\tau(\K\backslash\B_{R/2})}=z).
\end{align}
Uniformly for all $z\in\partial \B_{R/2}$, by \cite[Proposition 6.3.5]{LL} we have that
\begin{align*}
\mathsf G_{\K}(z,\mathbf 0)\le \mathsf G_{\B_{R+\delta\sqrt{\log R}}}(z, \mathbf 0) &\le \mathsf C_d\big[(R/2)^{2-d}-(R+\delta\sqrt{\log R})^{2-d}\big]+O(R^{1-d})\\
&\le \mathsf C_d 2^{d-1} R^{2-d}.
\end{align*}
The first inequality is due to both vertices $\mathbf 0, z\in\K\subset\B_{R+\delta\sqrt{\log R}}$, and 
the last inequality holds provided we take $R> R_1$ for some large enough $R_1=R_1(d)$. Further, for any $x\in \partial(\K^c)$,
\begin{align*}
\sum_{z\in\partial\B_{R/2}}\P_x(Y_{\tau_{\K\backslash\B_{R/2}}}=z) &=\P_x(Y \text{ hits }\B_{R/2} \text{ before }\K^c)\\
&\le \max_{y\in\partial\B_{R}}\left\{\P_y(Y \text{ hits }\B_{R/2} \text{ before }\K^c)\right\},
\end{align*}
since  starting from $\K^c$, in order to hit $\B_{R/2}$, $Y$ must first pass through $\partial\B_{R}$. Since $\{\mathsf G(Y_k)\}_{k\in\N}$ is a martingale provided $Y$ is bounded away from $\mathbf 0$, by optional stopping theorem and \eqref{green}, the latter probability is bounded by
\begin{align*}
\max_{y\in\partial\B_{R}}\left\{\P_y(Y \text{ hits }\B_{R/2} \text{ before }\B^c_{R+\delta\sqrt{\log R}})\right\}
&\le 2\frac{R^{2-d}-(R+\delta\sqrt{\log R})^{2-d}}{(R/2)^{2-d}-(R+\delta\log R)^{2-d}}\\
&\le 2^{d-1}(d-2)\delta\sqrt{\log R}/R,
\end{align*}
when $R>R_1$, upon increasing $R_1$ if necessary. Plugging these estimates into \eqref{hit-dist}, uniformly for all $x\in\partial (\K^c)$,  we have that $H_\K(\mathbf 0, x) \le \mathsf C_d 4^{d-1}(d-2)\delta R^{1-d}\sqrt{\log R}$.

Now, since $\partial\C\subset\partial(\K^c)$, the \abbr{LHS} of \eqref{hit-cell} is the same as $\sum_{x\in \partial\C}H_{\K}(\mathbf 0,x)$. The cardinality of $\partial\C$ is at most $c(d)c_p^{d-1}\delta\sqrt{\log R}$ for some dimensional constant $c(d)$. We thus get the desired bound of the form $C'R^{1-d}\log R$ for some $C'=C'(d,\delta, c_p)$ finite. 
\end{proof}

\section{Extensions to continous-time \abbr{IDLA}}     \label{sec:cont}

We consider simple random walk $\{X_{\lfloor t\rfloor}\}_{t\ge 0}$ \eqref{srw-def} evolving independently on a continuous-time analogue of \abbr{IDLA} $\{\D_t\}_{t\ge 0}$ on $\Z^d$, $d\ge 3$, formed by particles injected from the origin at an inhomogeneous Poisson rate $\lambda(t)>0$. As a result, one can achieve quite general growth rate for the cluster by tuning the particles injection rate. Such continuous-time extension of the classical model appears e.g. in \cite{LBG, GQ}.

More specifically, let $\{N(t)\}_{t\ge 0}$ be an inhomogeneous Poisson process with mean $m(t):=\int_0^t\lambda(s)ds$, satisfying suitable conditions as specified in \eqref{cond:rate}. Similarly to the discrete-time case, starting at $\D_0=\{\mathbf 0\}$, at each Poisson arrival time, a particle is injected from the origin, performing \abbr{SRW} until settling on the first unoccupied vertex, and then that vertex is added to the cluster. Clearly, by time $t$, a total of $N(t)$ particles have been injected. We still omit the travel time of the particles and consider them settled instantaneously, therefore $|\D_t|=N(t)$. Given $\{\D_t\}_{t\ge 0}$ we perform (lazy) \abbr{SRW} $\{X_{\lfloor t\rfloor}\}_{t\ge 0}$ independently on the cluster, jumping at integer times, as specified in \eqref{srw-def} with $n=\lfloor t\rfloor$. 

(This omission of particles travel time is inconsequential when $d\ge 3$, as explained already in \cite[Section 6]{LBG} and \cite{GQ}. That is,  our conclusions are unaltered if we allow particles to move simultaneously with each other and with the autonomous random walk $X$. When $d=2$, there is indeed a difference between the two formalisms.)

Since both Theorem \ref{shape-fluct} and Theorem \ref{thm:cheeger} are properties of fixed large cluster,  irrespective of whether the \abbr{IDLA} emission is time-continuous or discrete, we have the following result.
\begin{cor}    \label{phase-transition}
Fix $d\ge 3$. Assume that there exist some constants $0\le \alpha<d/2$ and $\beta>0$, such that the following hold for all $t$ large enough
\begin{align}   \label{cond:rate}
\beta\log t\le m(t)\le t^{\alpha}.
\end{align}
Then, for almost every realization $\omega$ of the continuous-time \abbr{IDLA} cluster $\{\D_t\}_{t\ge 0}$ on $\Z^d$, the uniformly lazy \abbr{SRW} $\{X_{\lfloor t\rfloor}\}_{t\ge 0}$ evolving independently on $\{\D^\omega_t\}_{t\ge 0}$ is recurrent as soon as $\int_1^\infty |\D^\omega_t|^{-1}dt=\infty$. Further, $X$ is recurrent for a.e. realization $\omega$ as soon as $\int_1^\infty m(t)^{-1}dt=\infty$.
\end{cor}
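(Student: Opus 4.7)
The plan is to apply Proposition \ref{hk-lbd} to the discrete-time graph sequence $\{\D_n\}_{n\in\N}$ obtained by sampling the continuous-time cluster at integer times (so $|\D_n|=N(n)$), deduce a heat kernel bound of order $1/N(n)$, and convert it into the integral criterion via monotonicity of $N$.

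I would first record the two-sided envelope $\tfrac12 m(t) \le N(t) \le 2m(t)$, $\P$-a.s.\ for all $t$ large. This follows by reducing to a unit-rate Poisson via the time change $s = m(t)$: the process $\tilde{N}(s):= N(m^{-1}(s))$ has rate $1$, so $\tilde{N}(s)/s\to 1$ a.s., and setting $s=m(t)$ transfers the control back to $N(t)$. Combined with \eqref{cond:rate} this yields the deterministic sandwich $(\beta/2)\log n \le N(n) \le 2n^\alpha$ a.s.\ eventually.

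Verifying the hypotheses of Proposition \ref{hk-lbd} then proceeds by conditioning on the particle count: given $N(n)=k$, the cluster $\D_n$ is distributed as a standard $k$-particle \abbr{IDLA} cluster, so Theorem \ref{shape-fluct} gives $\P(\cS^c_k)\le k^{-2}$, and Borel--Cantelli in the particle index $k$ (using $N(n)\to\infty$ a.s.) implies $\cS_{N(n)}$ holds for all but finitely many $n$, a.s. Together with $N(n) \ge (\beta/2)\log n$, picking $r(n):=\log\log n$ (any growth between $1$ and $(\log n)^{1/d}$ would do) gives $\D_n \supset \B_{r(n)}$ eventually and $|\B_{r(n)}|/|\D_n| = O((\log\log n)^d/\log n) \to 0$, establishing condition (i). An analogous application of Theorem \ref{thm:cheeger} (again via Borel--Cantelli over the random particle count) yields $\Phi(\D_n) \ge c\,N(n)^{-1/d}(\log N(n))^{-3/2}$ a.s.\ eventually; substituting the upper envelope $N(j)\le 2j^\alpha$ produces
\[
\sum_{j=\lfloor n/2\rfloor}^{n-1}\Phi(\D_j)^2 \;\ge\; c'\,n^{1-2\alpha/d}(\log n)^{-3},
\]
which exceeds $\log n$ for large $n$ precisely because $\alpha < d/2$, verifying condition (ii).

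Proposition \ref{hk-lbd} then delivers $\mathbf{P}^\omega(X_n=\mathbf 0)\ge c_0/N(n)$ for all large $n$, and summing while exploiting the monotonicity of $N$ (which gives $1/N(t)\le 1/N(\lfloor t\rfloor)$) yields
\[
\mathbf{E}^\omega\!\left[\int_1^\infty \mathbf{1}\{X_{\lfloor t\rfloor}=\mathbf 0\}\,dt\right] \;=\; \sum_{n\ge 1}\mathbf{P}^\omega(X_n=\mathbf 0) \;\ge\; c_0\int_1^\infty |\D^\omega_t|^{-1}\,dt,
\]
which proves the first assertion. For the second, the envelope $N(t)\le 2m(t)$ a.s.\ eventually forces $\int_1^\infty |\D^\omega_t|^{-1}\,dt \ge \tfrac12\int_1^\infty m(t)^{-1}\,dt$ on a set of full $\P$-measure, so divergence of the deterministic integral suffices. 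The main obstacle I anticipate is the first step: under the weak hypothesis $m(t)\ge \beta\log t$ with $\beta$ not assumed large, a direct Chernoff bound on $\P(|N(n)-m(n)|>m(n)/2)$ is not summable in $n$, so one must genuinely invoke the time change to a unit-rate Poisson (where summability is automatic from the standard SLLN) rather than attempt Borel--Cantelli on the inhomogeneous process directly.
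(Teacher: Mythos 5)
Your overall route is the same as the paper's: apply Proposition \ref{hk-lbd} to the integer-time snapshots, transfer Theorem \ref{shape-fluct} and Theorem \ref{thm:cheeger} to the random particle count $N(n)$ via Borel--Cantelli in the particle index, verify condition (ii) from $N(j)\le C m(j)\le C j^{\alpha}$ with $\alpha<d/2$, and integrate the resulting heat kernel bound using monotonicity of $N$. Your time-change argument ($\tilde N(s)=N(m^{-1}(s))$ is unit-rate, so $N(t)/m(t)\to 1$ a.s.) is a genuinely useful supplement: the paper only records the one-sided Chernoff bound $\P(N(t)>(1+\eta)m(t))\le t^{-\beta\eta^2/(2(1+\eta))}$, made summable by taking $\eta=\eta_\beta$ \emph{large}, and is silent on the lower envelope, for which (as you correctly note) a direct Chernoff bound is not summable when $\beta$ is small; yet a lower envelope is needed for condition (i).

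However, your verification of condition (i) is wrong as written. Condition (i) requires $|\D_n\setminus\B_{r(n)}|/|\D_n|\to 0$, i.e.\ $|\B_{r(n)}|/|\D_n|\to 1$: the deterministic ball must asymptotically \emph{exhaust} the cluster, not be a vanishing fraction of it. With your choice $r(n)=\log\log n$ you compute $|\B_{r(n)}|/|\D_n|=O((\log\log n)^d/\log n)\to 0$, which shows the ratio in condition (i) tends to $1$, not $0$, so the hypothesis fails badly. The repair is available from the two-sided envelope you already proved: since $N(n)/m(n)\to 1$ a.s., set $\rho(n):=(m(n)/\omega_d)^{1/d}$ and $r(n):=\rho(n)-2a\sqrt{\log\rho(n)}$; the shape theorem applied at particle count $N(n)=(1+o(1))m(n)$ gives $\D_n\supset\B_{r(n)}$ eventually a.s., while $|\B_{r(n)}|=(1-o(1))m(n)$ and $|\D_n|=N(n)=(1+o(1))m(n)$, so $|\D_n\setminus\B_{r(n)}|/|\D_n|\to 0$ as required. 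With that correction the remaining steps (condition (ii), the bound $\mathbf{P}^\omega(X_n=\mathbf 0)\ge c_0/N(n)$, and the conversion $\sum_n 1/N(n)\ge\int_1^\infty dt/N(t)\ge\tfrac12\int_1^\infty dt/m(t)$) are sound and match the paper.
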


\begin{remark}
Transience under $\int_1^\infty m(t)^{-1}dt<\infty$ is proved in \cite[Corollary 1.6, Remark 1.7]{DHS} for the {\it a-priori } weaker definition of transience as ``almost surely finitely many returns'' (instead of finite expected occupation time). The method used is different from this article.
\end{remark}

\begin{proof}[Proof of Corollary \ref{phase-transition}]
Since $N(t)$ is Poisson distributed with mean $m(t)=\int_0^t\lambda(s)ds$, for any $\eta>0$ and all $t> 1$ we have that
\begin{align}   \label{poisson-concentration}
\P\left(N(t)>(1+\eta) m(t)\right) \le \exp\Big\{-\frac{\eta^2m(t)}{2(1+\eta)}\Big\}\le \exp\Big\{-\frac{\beta\eta^2\log t}{2(1+\eta)}\Big\}.
\end{align}
Given $\beta>0$, there exists $\eta=\eta_\beta<\infty$ so that $\beta\eta^2/(2(1+\eta))>1$, which ensures that \abbr{RHS} of \eqref{poisson-concentration} is summable and $\P$-a.s. the \abbr{LHS} events of \eqref{poisson-concentration} at integer times $\lfloor t\rfloor$ occurs only finitely often, by Borel-Cantelli. Since $m(t)\to\infty$ as $t\to\infty$, the condition (i) of Proposition \ref{hk-lbd} is satisfied when $t$ is large enough, hence to apply that result it remains to verify again condition (ii) therein. Since a.e. $\omega$, for all $\lfloor t\rfloor$ large enough $|\D^\omega_{\lfloor t\rfloor}|=N^\omega(\lfloor t\rfloor)\le (1+\eta_\beta)m(\lfloor t\rfloor)$, by Theorem \ref{thm:cheeger} and \eqref{cond:rate} we have that
\begin{align*}
\sum_{k=\lfloor t/2\rfloor}^{\lfloor t \rfloor-1}\Phi^\omega(\D_k)^2 &\ge \sum_{k=\lfloor t/2\rfloor}^{\lfloor t \rfloor-1}N^\omega(k)^{-2/d}(\log N^\omega(k))^{-3/2}\ge c(d, \beta)\sum_{k=\lfloor t/2\rfloor}^{\lfloor t \rfloor-1}m(k)^{-2/d}(\log m(k))^{-3/2}\\
&\ge c(d,\beta, \alpha)\sum_{k=\lfloor t/2\rfloor}^{\lfloor t \rfloor-1}k^{-2\alpha/d}(\log k)^{-3/2},
\end{align*}
where the constant changes from line to line.
With $0\le \alpha<d/2$, there exists $\ep=\ep(d, \alpha)>0$ such that $1-2\alpha/d-\ep>0$ and thus
\begin{align*}
\sum_{k=\lfloor t/2\rfloor}^{\lfloor t \rfloor-1}\Phi^\omega(\D_k)^2 \ge c(d, \beta, \alpha)t^{1-2\alpha/d-\ep}\gg\log t.
\end{align*}
Therefore, Proposition \ref{hk-lbd} applies to give for some $c_0=c_0(d, \gamma)>0$, a.e. $\omega$,  and all $t$ sufficiently large
\begin{align*}
\mathbf{P}^\omega(X_t=\mathbf 0)\ge \frac{c_0}{|\D_{\lfloor t\rfloor}^\omega|}\ge \frac{c_0}{(1+\eta_\beta)m(\lfloor t\rfloor)}\ge \frac{c_0}{(1+\eta_\beta)m(t)}.
\end{align*}
This immediately implies the results.
\end{proof}

\medskip
\noindent
{\bf{Acknowledgments.}} I thank Eviatar Procaccia for pivotal comments that led to this work, and Amir Dembo for fruitful discussions.


\begin{thebibliography}{99}
\bibitem{ABGK}
G. Amir, I. Benjamini, O. Gurel-Gurevich and G. Kozma. Random walk in changing environment. 
\bibitem{AG1}
A. Asselah and A. Gaudilli\`ere. From logarithmic to subdiffusive polynomial fluctuations for Internal DLA and related growth models. {\emph{Ann. Probab. }} {\bf{41}} (2013), 1115-1159.

\bibitem{AG2}
A. Asselah and A. Gaudilli\`ere.  Sub-logarithmic fluctuations for Internal DLA. {\it Ann. Probab.} {\bf 41} (2013), 1160-1179.

\bibitem{B}
M.T. Barlow. {\it Random walks and heat kernels on graphs.}  London Mathematical Society Lecture Note Series {\bf 438}, Cambridge University Press 2017.

\bibitem{BM}
I. Benjamini and E. Mossel. On the mixing time of a simple random walk on the super critical percolation cluster. {\it Probab. Th. Rel. Fields}, {\bf 125} (2003), 408-420.

\bibitem{CF}
P. Caputo and A. Faggionato. Isoperimetric inequalities and mixing time for a random walk on a random point process. {\it Ann. Appl. Probab.} {\bf 17} (2007), 1707-1744.

\bibitem{DX}
F. Dai and Y. Xu. {\it Approximation theory and harmonic analysis	on spheres and balls.} Springer monographs in mathematics, Springer Science+Business Media, 2013. 

\bibitem{DHS}
A. Dembo, R. Huang and V. Sidoravicius. Walking within growing domains: recurrence versus transience. {\emph{Elect. J. Probab.}} {\bf 19} (2014), no. 106, 1-20.

\bibitem{DF}
P. Diaconis and W. Fulton. A growth model, a game, an algebra, Lagrange inversion, and characteristic classes. {\it Rend. Semin. Mat. Univ. Politec. Torino} {\bf 49} (1991), 95-119.
\bibitem{GQ}
J. Gravner and J. Quastel. Internal DLA and the Stefan problem. {\it Ann. Probab. } {\bf 28} (2000), 1528-1562.  

\bibitem{H}
R. Huang. On random walk on growing graphs. {\emph{Ann. Inst. H. Poincar\'e Prob. Stat.}} to appear.

\bibitem{JLS1}
D. Jerison, L. Levine and S. Sheffield. Logarithmic fluctuations for Internal DLA. {\it J. Amer. Math. Soc.} {\bf 25} (2012), 271-301.

\bibitem{JLS2}
D. Jerison, L. Levine and S. Sheffield. Internal DLA in higher dimensions. {\it Elect. J. Probab. } {\bf 18} (2013), 1-14.

\bibitem{LBG}
G.F. Lawler, M. Bramson and D. Griffeath. Internal diffusion limited aggregation. {\it Ann. Probab. } {\bf 23} (1995), 71-86.

\bibitem{LL}
G.F. Lawler and V. Limic. {\it Random walk: a modern introduction.} Cambridge studies of advanced mathematics {\bf 123}, Cambridge University Press, 2010. 

\bibitem{LPW}
D.A.  Levin, Y. Peres and E.L. Wilmer. {\it Markov chains and mixing times.} Amer. Math. Soc. 2009.

\bibitem{MR}
P. Mathieu and E. Remy. Isoperimetry and heat kernel decay on percolation clusters. {\it Ann. Probab. } {\bf 32} (2004), 100-128.

\bibitem{PS}
C. Pittet and L. Saloff-Coste. {\it A survey on the relationships between volume growth, isoperimetry, and the behavior of simple random walk on Cayley graphs, with examples.}

\bibitem{PRS}
E.B. Procaccia, R. Rosenthal and A. Sapozhnikov.   Quenched invariance principle for simple random walk
on clusters in correlated percolation models. {\it Probab. Th. Rel. Fields} {\bf 166} (2016), 619-657. 

\bibitem{SC}
L. Saloff-Coste. {\it Lectures on finite Markov chains. } Lectures on probability theory and statistics (pp. 301-413). Springer, Berlin, Heidelberg, 1997. 

\end{thebibliography}
\end{document}